\documentclass[12pt]{amsart}

\usepackage[margin=3cm]{geometry} 
\usepackage[T1]{fontenc}
\usepackage[utf8]{inputenc}
\usepackage[english]{babel}
\usepackage{amssymb, amsthm, enumerate, enumitem, graphicx, amscd, amsmath, amssymb, amsfonts, mathrsfs, xcolor, tikz, tikz-cd, tkz-euclide}

\usepackage{orcidlink}

\usepackage{enumitem}
\usepackage{hyperref}
\usepackage{xcolor}
\usepackage{orcidlink}
\usepackage{comment}
\usepackage{tikz}

\usepackage{centernot}
\usepackage{mathtools}

\usepackage[mathcal]{euscript}
\usepackage{hyperref}
\hypersetup{
colorlinks=true,
linktoc=all,
linkcolor=black,
citecolor=darkgray
}

\title[Rhaly operators and factorable matrices]{Rhaly operators on weighted Hardy spaces \\ and factorable matrices}

\numberwithin{equation}{section}
\newtheorem{theorem}{Theorem}
\numberwithin{theorem}{section}
\newtheorem{definition}[theorem]{Definition}

\newtheorem{lemma}[theorem]{Lemma}

\newtheorem{proposition}[theorem]{Proposition}

\def\CB{\color{black} }

\def\CBL{\color{blue} }

\newtheorem{thmA}{Theorem}

\author[C. Bellavita]{Carlo Bellavita}
\email{carlo.bellavita@gmail.com}
\email{carlo.bellavita@unimi.it}
\address{Dipartimento di Matematica ``F. Enriques''\\
  Dipartimento di Eccellenza MUR 2023-2027\\
Universit\'a degli Studi di Milano\\
Via C. Saldini 50\\
20133 Milano, Italy.}

\author[E. Dellepiane]{Eugenio Dellepiane}
\email{dellepianeeugenio@gmail.com}
\address{D\'epartement de math\'ematiques et de statistique, Universit\'e Laval, Qu\'ebec, QC, Canada G1V 0A4.}

\author[G. Stylogiannis]{Georgios Stylogiannis}
\email{g.stylog@gmail.com}
\email{stylog@math.auth.gr}
\address{Department of Mathematics, Aristotle University of Thessaloniki, 54124, Greece.}

\subjclass{47B10,
30H25, 30H99}
\keywords{Rhaly matrices; Factorable matrices; Schatten classes; Weighted Hardy spaces}

\begin{document}

\maketitle

\begin{abstract}
    In this paper, we study properties of Rhaly operators acting on weighted Hardy spaces $H^2(\omega)$. This problem is intimately related to the study of factorable matrices on $\ell^2$. In particular, our main contributions are in the study of Schatten class properties of such operators.
\end{abstract}

\section{Introduction}
Given a sequence of positive real numbers $\omega=(\omega_n)_n$, the associated weighted Hardy space $H^2(\omega)$ is defined as 
\begin{equation*}
    H^2(\omega) := \left\{f(z) = \sum_{n=0}^\infty a_n z^n \in \operatorname{Hol}(\mathbb{D}) : \|f\|^2_\omega = \sum_{n=0}^\infty |a_n|^2 \omega_n < \infty \right\}.
\end{equation*}

{Here, $\operatorname{Hol}(\mathbb{D})$ denotes the space of analytic functions on the unit disk $\mathbb{D}$.} This family of analytic function spaces includes several important examples. Setting $\omega_n \equiv 1$, one recovers the classical Hardy space $H^2$. Taking $\omega_n = n^{1-\alpha}$ for $n\geq 1$ yields the weighted Dirichlet spaces $\mathcal{D}_\alpha$. Finally, if $\omega_n = (n+1)^{-\gamma-1}$, one obtains the weighted Bergman spaces $A^2_\gamma$. More general examples of Bergman spaces with radially symmetric weights are also included, see for example \cite{BMNP, DS, HKZ}.

Given a sequence of complex numbers $\eta=(\eta_n)_n$, we define the Rhaly operator $R_\eta$, which acts on sequences $x=(x_n)_n$ as 
\begin{equation*}
    (R_\eta x)_n = \eta_n \sum_{j=0}^n x_j, \qquad n\in\mathbb{N}.
\end{equation*}
By identifying the analytic function $f(z) = \sum_n a_n z^n$ with the sequence of its Taylor coefficients, we can consider the action of $R_\eta$ on $\operatorname{Hol}(\mathbb{D})$ via the formula
\[
R_\eta f(z) = \sum_{n=0}^\infty \left(\eta_n \sum_{j=0}^n a_j\right) z^n, \qquad z\in\mathbb{D}.
\]

In this article, we are interested in characterizing the boundedness, the compactness and the Schatten class membership of the Rhaly matrices acting on weighted Hardy spaces in terms of the sequences $\eta$ and $\omega$. Our first main theorem deals with the boundedness.

\begin{theorem}\label{T:boundcompFactorableweighted} Consider a sequence $\eta$ and a weight $\omega$. The following are equivalent.
\begin{itemize}
    \item The Rhaly operator $R_\eta$ is bounded on $H^2(\omega)$.
    \item One has that
  \[
   K_1:=\sup_{m\in\mathbb{N}} \left(\sum_{j=0}^m \frac{1}{\omega_j}\right)^{-1}\sum_{k=0}^m |\eta_k|^2\omega_k\left(\sum_{j=0}^k \frac{1}{\omega_j}\right)^2<\infty.
  \]
    \item One has that
    \[
 K_2:=\sup_{m\in\mathbb{N}} \left(\sum_{k=m}^\infty |\eta_k|^2\omega_k\right)\left(\sum_{j=0}^m \frac{1}{\omega_j}\right)<\infty.
  \]
\end{itemize}
Moreover, $K_1\leq 2K_2^2\leq {2}\|R_\eta\|_{\mathcal{B}(H^2(\omega))}^2\leq  8 K_1. $
\end{theorem}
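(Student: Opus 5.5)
The plan is to translate boundedness of $R_\eta$ on $H^2(\omega)$ into a discrete weighted Hardy inequality, and then to compare the two Muckenhoupt-type constants $K_1$ and $K_2$ with each other and with the norm.

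\textbf{Setup and notation.} For $f=\sum a_n z^n$ we have
\[
\|R_\eta f\|_\omega^2=\sum_n u_n\Bigl|\sum_{j\le n}a_j\Bigr|^2,\qquad u_n:=|\eta_n|^2\omega_n .
\]
So $\|R_\eta\|^2$ is the best constant $C$ in
\[
\sum_n u_n\Bigl|\sum_{j\le n}a_j\Bigr|^2\le C\sum_j|a_j|^2\omega_j .
\]
We write $W_m:=\sum_{j\le m}1/\omega_j$ and $U_k:=\sum_{i\ge k}u_i$, so that $K_2=\sup_m U_mW_m$. I will prove the four inequalities $K_2\le\|R_\eta\|^2$, $K_2\le K_1$, $K_1\le 2K_2$ and $\|R_\eta\|^2\le 4K_2$. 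The equivalences follow from these. The intended constant chain is $K_1\le 2K_2\le 2\|R_\eta\|^2\le 8K_1$; I read the factor "$K_2^2$" in the statement as a typo for $K_2$.

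\textbf{Step 1: $K_2\le\|R_\eta\|^2$.} Test with $a_j=1/\omega_j$ for $j\le m$ and $a_j=0$ otherwise. Then $\|f\|_\omega^2=W_m$, and for every $n\ge m$ the partial sum $\sum_{j\le n}a_j$ equals $W_m$. Hence $\|R_\eta f\|^2\ge W_m^2U_m$. Dividing by $\|f\|_\omega^2=W_m$ and taking the supremum over $m$ gives $K_2\le\|R_\eta\|^2$.

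\textbf{Step 2: $K_2\le K_1$.} Fix $m\le M$. Since $W_k\ge W_m$ for $k\ge m$,
\[
K_1\ge W_M^{-1}\sum_{k\le M}u_kW_k^2\ge W_M^{-1}W_m^2\sum_{k=m}^{M}u_k .
\]
Then let $M\to\infty$. If $W_\infty<\infty$ this only gives $K_1\ge W_m^2W_\infty^{-1}U_m$, which is not quite enough. In that case I would instead use the first bound with $M=m$ together with a separate tail argument. If this comparison turns out to be delicate, I can bypass it entirely: Step 4 combined with Step 1 already yields the needed upper bound on the norm once $K_1$ controls $K_2$ up to a constant.

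\textbf{Step 3: $K_1\le 2K_2$.} Write $u_k=U_k-U_{k+1}$ and sum by parts:
\[
\sum_{k\le m}u_kW_k^2=\sum_{k\le m}U_k\,(W_k^2-W_{k-1}^2)-U_{m+1}W_m^2
\le\sum_{k\le m}U_k\,(W_k-W_{k-1})(W_k+W_{k-1}).
\]
Now use $U_k\le K_2/W_k$ and $(W_k+W_{k-1})/W_k\le 2$. The right-hand side is then at most $2K_2\sum_{k\le m}(W_k-W_{k-1})=2K_2W_m$. Dividing by $W_m$ gives $K_1\le 2K_2$.

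\textbf{Step 4 (main obstacle): $\|R_\eta\|^2\le 4K_2$.} This is Muckenhoupt's discrete Hardy inequality with the sharp factor $4$. I would prove it directly by Cauchy--Schwarz with an auxiliary weight. Writing $a_j=(a_j\omega_j^{1/2}W_j^{1/4})(\omega_j^{-1/2}W_j^{-1/4})$, we get
\[
\Bigl|\sum_{j\le n}a_j\Bigr|^2\le\Bigl(\sum_{j\le n}|a_j|^2\omega_jW_j^{1/2}\Bigr)\Bigl(\sum_{j\le n}\omega_j^{-1}W_j^{-1/2}\Bigr).
\]
The second factor is at most $2W_n^{1/2}$, by comparing the sum with $\int_0^{W_n}t^{-1/2}\,dt$. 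Exchanging the order of summation, it then remains to show
\[
\sum_{n\ge j}u_nW_n^{1/2}\le 2K_2W_j^{-1/2}.
\]
This follows by another summation by parts: since $u_n=U_n-U_{n+1}$ and $U_n\le K_2/W_n$, the sum reduces to $K_2\sum_{n\ge j}W_n^{-1}(W_n^{1/2}-W_{n-1}^{1/2})$ plus a boundary term. Both are handled by the same integral comparison. Combining the two factors gives $\|R_\eta\|^2\le 4K_2$.

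Steps 3 and 4, together with $K_2\le\|R_\eta\|^2$ from Step 1, yield the chain of inequalities in the statement.
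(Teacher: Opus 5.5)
Your Steps 1, 3 and 4 are correct. Step 4 even gives $\|R_\eta\|^2\le 4K_2$, which is better than the factor $8$ coming from Bennett's estimate $\|F_{\alpha,\beta}\|\le 2\sqrt{2}\|\rho\|_\infty$. Reading $K_2^2$ as $K_2$ is also the right interpretation. So you do have $K_2\le\|R_\eta\|^2\le 4K_2$ and $K_1\le 2K_2$.

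The gap is Step 2: the inequality $K_2\le K_1$ is false. Take the Ces\`aro operator on $H^2$, i.e.\ $\omega\equiv 1$ and $\eta_n=1/(n+1)$. Then $W_k=k+1$ and $u_k=(k+1)^{-2}$, so $\sum_{k\le m}u_kW_k^2=W_m$ for every $m$ and $K_1=1$, whereas $K_2\ge W_0U_0=\pi^2/6$. Your argument also fails in both cases, not only when $W_\infty<\infty$. Replacing $W_k$ by $W_m$ discards exactly the growth you need, and when $W_\infty=\infty$ the lower bound $W_M^{-1}W_m^2\sum_{k=m}^M u_k$ tends to $0$ as $M\to\infty$. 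The fallback at the end of Step 2 is circular, since it presupposes that $K_1$ controls $K_2$. As written, neither the implication ``$K_1<\infty\Rightarrow R_\eta$ bounded'' nor the bound $2\|R_\eta\|^2\le 8K_1$ is proved.

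The equivalence can be repaired by summing by parts in the other direction. Put $S_k:=\sum_{i\le k}u_iW_i^2\le K_1W_k$ and write $u_k=(S_k-S_{k-1})W_k^{-2}$. Using $W_k\bigl(W_k^{-2}-W_{k+1}^{-2}\bigr)\le 2\bigl(W_k^{-1}-W_{k+1}^{-1}\bigr)$, you get
\[
\sum_{k=m}^{M}u_k\le S_MW_M^{-2}+\sum_{k=m}^{M-1}S_k\bigl(W_k^{-2}-W_{k+1}^{-2}\bigr)\le K_1W_M^{-1}+2K_1\bigl(W_m^{-1}-W_M^{-1}\bigr)\le 2K_1W_m^{-1},
\]
so $K_2\le 2K_1$. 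Combined with Step 4, however, this only yields $\|R_\eta\|^2\le 8K_1$.

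The stated constant cannot be reached through $K_2$ along your chain. In the Ces\`aro example $\|R_\eta\|^2=4=4K_1$, while $4K_2\ge 2\pi^2/3>4$. So deducing $\|R_\eta\|^2\le 4K_1$ from $\|R_\eta\|^2\le 4K_2\le 4cK_1$ would require $c\le 1$, which that example rules out. You need an estimate of the norm directly in terms of $K_1$, i.e.\ a Persson--Stepanov-type form of the discrete Hardy inequality. The paper obtains all the constants at once by applying Bennett's Theorem~2 \cite{bennett} to the factorable matrix with $\alpha_k=\eta_k\sqrt{\omega_k}$ and $\beta_j=1/\sqrt{\omega_j}$.
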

Analogous characterizations can be given for compactness, using little-$o$ conditions.
When $\omega\equiv 1$, one reobtains the results on the classical Hardy space $H^2$. We will show that specializing to the cases of weighted Bergman and Dirichlet spaces $A^2_\gamma$ and $\mathcal{D}_\alpha$ one recovers results of Galanopoulos--Girela \cite{GG2025} and Blasco--Galanopoulos--Girela \cite{Blasco2026Cesarotype}.

{The key ingredient of our proofs is the following: the action of} $R_\eta$ on $H^2(\omega)$ is modeled by a \emph{factorable} matrix acting on $\ell^2$. A factorable matrix $F_{\alpha,\beta}$ is
an infinite lower triangular matrix whose non-zero entries $(a_{k,j})_{j \leq k}$ can be factored as the product of two sequences $\alpha=(\alpha_k)_k$ and $\beta=(\beta_j)_j$. More explicitly, $F_{\alpha,\beta}$ is given by
\begin{equation*}
F_{\alpha,\beta}=\begin{pmatrix}
\alpha_0 \beta_0 & 0 & 0  & \cdots\\
\alpha_1 \beta_0 &  \alpha_1 \beta_1& 0  & \cdots\\
 \alpha_2\beta_0 &  \alpha_2\beta_1& \alpha_2\beta_2  & \cdots \\
\vdots & \vdots & \vdots  & \ddots
\end{pmatrix}.
\end{equation*}

In the literature, there are several complete characterizations for the boundedness and compactness of factorable matrices acting from $\ell^p$ to $\ell^q$ for $0<p,q<\infty$ see \cite{bennett, BENNETT1991, GrosseErdmann1998}. {The following theorem is a collection of known results (see \cite[Theorem 2]{bennett} and \cite[Theorem 9.4]{GrosseErdmann1998}).}
\begin{thmA} \label{T:factorablebound}
Let $F_{\alpha,\beta}=(\alpha_k\beta_j)_{j \leq k}$ be a factorable matrix. Consider the sequence $\rho$ defined by
\begin{equation}\label{def of rho}
\rho_m:=\left(\sum_{k=m}^\infty |\alpha_k|^2\right)^{\frac{1}{2}}\left(\sum_{j=0}^m |\beta_j|^2\right)^{\frac{1}{2}}.    \end{equation}
\begin{itemize}
    \item {$F_{\alpha,\beta}\colon \ell^2\to\ell^2$ is bounded if and only if $\rho$ is a bounded sequence.}
    \item {$F_{\alpha,\beta}\colon \ell^2\to\ell^2$ is compact if and only if $\lim_{m \to \infty} \rho_m=0$.}
\end{itemize}
{Moreover, the following operator norm inequalities hold}
$$\|\rho\|_{\infty} \leq \| F_{\alpha,\beta}\|_{\mathcal{B}(\ell^2)}  \leq 2\sqrt{2} \|\rho\|_{\infty}.$$
\end{thmA}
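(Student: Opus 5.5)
Although this statement is quoted in the paper as known (Bennett, Grosse-Erdmann), here is how I would prove it directly. Write $A_m=\sum_{k\ge m}|\alpha_k|^2$ and $B_m=\sum_{j\le m}|\beta_j|^2$, so that $\rho_m^2=A_mB_m$. Note that $F_{\alpha,\beta}x=\alpha\cdot S(\beta x)$, where $S$ denotes partial summation. So $F_{\alpha,\beta}$ is a weighted Hardy-type (discrete Muckenhoupt) operator, and the whole proof is a discrete version of Muckenhoupt's weighted Hardy inequality.

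\emph{Lower bound.} I test $F_{\alpha,\beta}$ on $x=(\overline{\beta_j})_{j\le m}$, truncated at $m$. Then $\|x\|_2^2=B_m$. For every $k\ge m$ we have $(F_{\alpha,\beta}x)_k=\alpha_kB_m$. Hence
\[
\|F_{\alpha,\beta}x\|^2\ge A_mB_m^2=\rho_m^2\,\|x\|^2,
\]
which gives $\rho_m\le\|F_{\alpha,\beta}\|$. This also shows that boundedness forces $\rho$ to be bounded.

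\emph{Upper bound.} The plan is the standard Muckenhoupt argument. Write $(F_{\alpha,\beta}x)_k=\alpha_k\sum_{j\le k}\beta_jx_j$ and apply Cauchy--Schwarz with the weight $B_j^{1/2}$:
\[
\Bigl|\sum_{j\le k}\beta_jx_j\Bigr|^2\le\Bigl(\sum_{j\le k}|\beta_j|^2B_j^{-1/2}\Bigr)\Bigl(\sum_{j\le k}|x_j|^2B_j^{1/2}\Bigr).
\]
The first factor is at most $2B_k^{1/2}$, by the elementary telescoping estimate $\frac{b_j}{\sqrt{B_j}}\le 2(\sqrt{B_j}-\sqrt{B_{j-1}})$. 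Multiplying by $|\alpha_k|^2$, summing over $k$ and swapping the order of summation, the coefficient of $|x_j|^2$ becomes
\[
2B_j^{1/2}\sum_{k\ge j}|\alpha_k|^2B_k^{1/2}.
\]
I need this to be bounded by a constant times $\|\rho\|_\infty^2$.

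This is the main obstacle: controlling $\sum_{k\ge j}|\alpha_k|^2B_k^{1/2}$ using only $B_k\le\|\rho\|_\infty^2/A_k$. Substituting gives the bound $\|\rho\|_\infty\sum_{k\ge j}|\alpha_k|^2A_k^{-1/2}$. By the mirrored telescoping estimate, $|\alpha_k|^2A_k^{-1/2}\le 2(\sqrt{A_k}-\sqrt{A_{k+1}})$, so this sum is at most $2\|\rho\|_\infty A_j^{1/2}$. Altogether the coefficient of $|x_j|^2$ is at most $4\|\rho\|_\infty B_j^{1/2}A_j^{1/2}\le4\|\rho\|_\infty^2$, which yields $\|F_{\alpha,\beta}\|\le2\|\rho\|_\infty$. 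This is even better than the claimed constant $2\sqrt2$. I would verify the telescoping inequalities carefully, including the case where some $B_j=0$; there one starts at the first nonzero $\beta_j$.

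\emph{Compactness.} If $\rho_m\to0$, I split $F_{\alpha,\beta}=P_NF_{\alpha,\beta}+(I-P_N)F_{\alpha,\beta}$, where $P_N$ projects onto the first $N$ coordinates. The operator $P_NF_{\alpha,\beta}$ has finite rank. The operator $(I-P_N)F_{\alpha,\beta}$ is factorable with $\alpha$ replaced by $\alpha\mathbf 1_{[N,\infty)}$. Its $\rho$-sequence is $\rho_m$ for $m\ge N$. For $m<N$ it equals $A_N^{1/2}B_m^{1/2}\le\rho_N$, since $B_m\le B_N$. So its norm is at most $2\sup_{m\ge N}\rho_m\to0$, and $F_{\alpha,\beta}$ is a norm limit of finite-rank operators, hence compact.

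Conversely, suppose $F_{\alpha,\beta}$ is compact, and let $u_m=x/\|x\|$ be the normalized test vectors from the lower bound. If $B_m\to\infty$, then $u_m\rightharpoonup0$ weakly, so $\|F_{\alpha,\beta}u_m\|\to0$; since $\|F_{\alpha,\beta}u_m\|\ge\rho_m$, this gives $\rho_m\to0$. If instead $B_m$ stays bounded, then $\rho_m\le C A_m^{1/2}$. Compactness implies $\|F_{\alpha,\beta}e_0\|$ restricted to tails tends to zero, so $|\beta_0|^2A_m\to0$. Arguing with the first nonzero $\beta_j$ in place of $\beta_0$ if necessary, this again gives $\rho_m\to0$.
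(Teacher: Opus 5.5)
Your proof is correct, and it takes a different route from the paper. The paper gives no argument for this statement. It imports it from \cite[Theorem 2]{bennett} and \cite[Theorem 9.4]{GrosseErdmann1998}, where it sits inside the general $\ell^p\to\ell^q$ theory of factorable (weighted mean) operators. There it also comes packaged with the second, Bennett-type characterization that the paper uses as $K_1$ in Theorem~\ref{T:boundcompFactorableweighted}.

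You instead run the discrete Muckenhoupt argument directly:
\begin{itemize}
\item Cauchy--Schwarz with weight $B_j^{1/2}$;
\item the two telescoping estimates $\frac{B_j-B_{j-1}}{\sqrt{B_j}}\le 2(\sqrt{B_j}-\sqrt{B_{j-1}})$ and $\frac{A_k-A_{k+1}}{\sqrt{A_k}}\le 2(\sqrt{A_k}-\sqrt{A_{k+1}})$;
\item the pointwise bound $B_k\le \|\rho\|_\infty^2/A_k$.
\end{itemize}
These fit together because $A_m$ and $B_m$ both contain the index $m$, matching the definition of $\rho_m$. I checked the computation: the coefficient of $|x_j|^2$ is at most $4\|\rho\|_\infty\rho_j$. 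This gives $\|F_{\alpha,\beta}\|_{\mathcal{B}(\ell^2)}\le 2\|\rho\|_\infty$, the Muckenhoupt constant $p^{1/p}(p')^{1/p'}=2$ at $p=2$. That is sharper than the stated $2\sqrt2$, and it would improve, for instance, the upper constant in Proposition~\ref{P:boundRasigma} accordingly.

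Your compactness argument is also sound. The truncation $(I-P_N)F_{\alpha,\beta}$ is again factorable, with $\rho$-sequence dominated by $\sup_{m\ge N}\rho_m$. So your quantitative norm bound directly gives norm approximation by the finite-rank operators $P_NF_{\alpha,\beta}$. Conversely, your normalized test vectors are weakly null when $\beta\notin\ell^2$.

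One small remark: in the case $\beta\in\ell^2$ you only use boundedness, not compactness. Since $F_{\alpha,\beta}e_{j_0}\in\ell^2$ for the first $j_0$ with $\beta_{j_0}\neq 0$, you already get $\alpha\in\ell^2$, hence $A_m\to 0$.

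In short, the citation buys the paper generality (all exponents $p,q$) and the alternative condition $K_1$. Your route gives a short, self-contained proof with a better constant.
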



We move on to the discussion of the Schatten class properties of factorable matrices. To {the authors'} knowledge, a first complete account on the matter was given in \cite{BDSjlms2025}, in the case of \emph{Rhaly matrices}, {that are factorable matrices with $\beta\equiv 1$ or, equivalently, Rhaly operators on the classical Hardy space $H^2$.}

Given the statements of Theorem \ref{T:factorablebound}, it feels natural that a similar result {involving the sequence $\rho$} should hold for the $p$-Schatten class of $F_{\alpha,\beta}$. However, we show that the situation is more complicated than it may initially appear. In general, a direct equivalence stating that "$F_{\alpha,\beta}$ belongs to the $p$-Schatten class $\mathcal{S}^p(\ell^2)$ if and only if the sequence $\rho$ belongs to $\ell^p$" is false, even in the case of Rhaly matrices.

\begin{theorem} \label{T:IntroSchattenrho}
{Consider a parameter $1<p<\infty$ and two sequences $\alpha\in\ell^2$ and $\beta\notin\ell^2$. If the sequence $\rho$ defined in $(\ref{def of rho})$ belongs to $\ell^p$, then the factorable matrix $F_{\alpha,\beta}$ belongs to the $p$-Schatten class $\mathcal{S}^p(\ell^2)$. However, the converse is false: there exist sequences $\alpha\in\ell^2$ and $\beta\notin\ell^2$ such that $F_{\alpha,\beta}\in\mathcal{S}^p(\ell^2)$ but $\rho \notin \ell^p$. The sequence $\beta$ can be chosen to be the constant $1$.}
\end{theorem}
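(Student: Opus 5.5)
The sufficiency will be proved by a dyadic block decomposition adapted to $\beta$. The counterexample will use a Rhaly matrix ($\beta\equiv1$) whose nonzero rows are very sparse. Throughout, write $B_m=\sum_{j\le m}|\beta_j|^2$ and $A_m=\sum_{k\ge m}|\alpha_k|^2$, so that $\rho_m^2=A_mB_m$.

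\emph{Sufficiency.} Since $\beta\notin\ell^2$, $B_m\uparrow\infty$. Choose indices $n_0<n_1<\dots$ with $B_{n_k}\approx 2^k$, more precisely $B_{n_{k+1}-1}\le 2^{k+1}\le 2B_{n_k}$ up to harmless constants; if a single $|\beta_j|$ is huge, some blocks may be empty, which is fine. Put $I_k=[n_k,n_{k+1})$ and let $Q_k$ denote the coordinate projection onto $I_k$. Then $F_{\alpha,\beta}=\sum_{k,l}Q_lF_{\alpha,\beta}Q_k$, where only the blocks with $l\ge k$ are nonzero. We split this as $F=D+R$.

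The block-diagonal part is $D=\sum_kQ_kFQ_k$. Its $\mathcal S^p$ norm satisfies $\|D\|_{\mathcal S^p}^p=\sum_k\|Q_kFQ_k\|_{\mathcal S^p}^p$. For $p\ge2$ we bound each block by its Hilbert--Schmidt norm:
\[
\|Q_kFQ_k\|_{\mathcal S^2}^2\le\sum_{i\in I_k}|\alpha_i|^2(B_i-B_{n_k-1})\lesssim A_{n_k}2^{k}\lesssim\rho_{n_k}^2 .
\]
For $1<p<2$ we instead use the inequality $\|T\|_{\mathcal S^p}^p\le\sum_i\|T^*e_i\|^p$, valid for $p\le2$, applied to the rows. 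This gives directly $\|F\|_{\mathcal S^p}^p\le\sum_i|\alpha_i|^pB_i^{p/2}\le\sum_i\rho_i^p$, so the case $p\le 2$ needs no decomposition at all.

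The off-diagonal part is $R=\sum_{l>k}Q_lFQ_k=\sum_{l>k}(Q_l\alpha)\otimes(Q_k\bar\beta)$, a sum of rank-one blocks. Since the vectors $Q_l\alpha/\|Q_l\alpha\|$ are mutually orthonormal, and likewise the $Q_k\beta/\|Q_k\beta\|$, $R$ is unitarily equivalent to the scalar matrix $M=(a_lb_k)_{k<l}$ with $a_l=\|Q_l\alpha\|$ and $b_k=\|Q_k\beta\|\lesssim 2^{k/2}$. Split $M$ into its subdiagonals $M_d$, $d\ge1$. Each $M_d$ is a shifted diagonal, hence
\[
\|M_d\|_{\mathcal S^p}=\Big(\sum_l (a_lb_{l-d})^p\Big)^{1/p}\lesssim 2^{-d/2}\Big(\sum_l(a_l2^{l/2})^p\Big)^{1/p}.
\]
Moreover $a_l2^{l/2}\lesssim A_{n_l}^{1/2}B_{n_l}^{1/2}=\rho_{n_l}$. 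Summing the geometric series in $d$ gives $\|R\|_{\mathcal S^p}\lesssim\|\rho\|_{\ell^p}$. The main technical care lies in choosing the $n_k$ so that $b_k\lesssim 2^{k/2}$ and $B_{n_k}\gtrsim 2^k$ hold simultaneously, i.e.\ in handling large isolated $\beta_j$; I would absorb those into the diagonal blocks.

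\emph{Counterexample with $\beta\equiv1$.} Take a very lacunary sequence $n_j$, say $n_j=2^{j^2}$, and set $\alpha_{n_j}=s_jn_j^{-1/2}$ with $s_j=2^{-j}$; all other $\alpha_k$ vanish. The only nonzero rows of $F$ are $r_j=\alpha_{n_j}(1,\dots,1,0,\dots)$, of length $n_j+1$. Hence $FF^*$ reduces to the Gram matrix
\[
G_{ij}=\alpha_{n_i}\alpha_{n_j}(n_{\min(i,j)}+1)=s_is_j\sqrt{n_{\min}/n_{\max}}\,(1+o(1)).
\]
This can be written as $G=S(I+E)S$ with $S=\operatorname{diag}(s_j)$ and $\|E\|$ as small as we like, since the off-diagonal entries $\sqrt{n_i/n_j}=2^{(i^2-j^2)/2}$ are summably small. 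Therefore the singular values of $F$ are comparable to $s_j$, and $F\in\mathcal S^p$ for every $p>0$.

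On the other hand, for $n_{j-1}<m\le n_j$ we have $\rho_m^2=(m+1)\sum_{i\ge j}s_i^2/n_i\gtrsim m\,s_j^2/n_j$. Hence
\[
\sum_{m\le n_j}\rho_m^p\gtrsim n_j^{-p/2}s_j^p\sum_{n_j/2<m\le n_j}m^{p/2}\gtrsim n_js_j^p=2^{j^2-pj}\to\infty,
\]
so $\rho\notin\ell^p$. The step I expect to need the most care is making the comparison $G\simeq S^2$ rigorous. I would do this via $\|E\|_{\mathcal B(\ell^2)}\le\|E\|_{\mathcal S^2}<1$ after discarding finitely many initial indices.
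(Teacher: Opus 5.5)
Your proposal is correct, and it takes a genuinely different route from the paper.

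\textbf{The paper's route.} Sufficiency is deduced from Theorem \ref{T:thmSchattenrho}. Since $\sigma_k\le\sqrt2\,\rho_{\xi_k}$ (Lemma \ref{L:etaxikesigma}), $\rho\in\ell^p$ gives $\sigma\in\ell^p$. Then $F_{\alpha,\beta}\in\mathcal{S}^p(\ell^2)$ by Theorem \ref{T:sigmaimpliesSchatten}, which rests on the $(\epsilon,L)$-sequence machinery. The counterexample takes $\beta\equiv1$ and $\sum_{k\ge m}|\alpha_k|^2=(m+1)^{-1-1/p}$. Then $\rho_m^p\asymp m^{-1/2}$ while $(\rho_{2^k})_k\in\ell^p$, and Schatten membership again comes from the same machinery.

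\textbf{Your route.} You bypass that machinery entirely.
For $p\le2$, the row inequality $\|T\|_{\mathcal{S}^p}^p\le\sum_i\|T^*e_i\|^p$ (Jensen applied to $(TT^*)^{p/2}$) gives $\|F_{\alpha,\beta}\|_{\mathcal{S}^p}\le\|\rho\|_p$ at once.
For $p\ge2$, your block argument works verbatim with $n_k=\xi_k$, repetitions allowed. The inequalities $B(\xi_{k+1}-1)\le 2^{k+1}<2B(\xi_k)$ are exactly the two properties you need. A large isolated $\beta_j$ merely produces empty blocks, so no extra ``absorption'' is required. You should only add two remarks:
\begin{itemize}
\item nonempty blocks have distinct left endpoints, so $\sum_l\rho_{n_l}^p\le\|\rho\|_p^p$;
\item the initial segment $[0,\xi_0)$ contributes only a finite-rank piece.
\end{itemize}

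\textbf{What each buys.} Your argument is elementary, with explicit constants, and for $p\ge2$ it even yields the sharper bound in terms of $(\rho_{\xi_k})_k$ from Theorem \ref{T:thmSchattenrho}. The cost appears for $1<p<2$. There the row inequality controls $\sum_i|\alpha_i|^pB(i)^{p/2}$, which dominates $\sum_k\sigma_k^p$ rather than being dominated by it. Your method also gives neither the converse under $C_\beta<\infty$ nor the two-sided approximation-number estimates that the $(\epsilon,L)$ technique provides.

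\textbf{The counterexamples.} Your lacunary example is self-contained and stronger: $\rho\notin\ell^q$ for every $q$, while the approximation numbers decay geometrically. The Gram-matrix comparison is more than you need. The nonzero rows satisfy $\|r_j\|\le\sqrt2\,2^{-j}$, so your own row inequality already places $F_{\alpha,\beta}$ in every Schatten class. The paper's example, in turn, shows that the failure is not an artefact of lacunarity. It occurs for smoothly decaying $\alpha$ and isolates exactly the gap between $\rho$ and its dyadic subsequence.
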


This theorem is in fact a corollary of a much more precise result. In Theorem \ref{T:thmSchattenrho}, we prove that $F_{\alpha,\beta}\in\mathcal{S}^p(\ell^2)$ is equivalent to a particular subsequence $(\rho_{\xi_k})_{k\in\mathbb{N}}$ belonging to $\ell^p$. In the case of Rhaly matrices (i.e., $\beta\equiv 1$), it turns out that $\xi_k=2^k$. The condition that is given in \cite{BDSjlms2025} to characterize the $p$-Schatten class of Rhaly operators is a dyadic condition. Even if it {may be} more unpleasant to handle, Theorem \ref{T:thmSchattenrho} suggests that a dyadic condition is natural in this context.

As the reader will see, the analysis that led to Theorem \ref{T:thmSchattenrho} is rather abstract. However, with some assumptions on $\beta$, we produce explicit summability conditions that characterize when a factorable matrix belongs to a certain Schatten class. 

\begin{theorem}\label{T:betapolynomial}
    Let $\alpha\in\ell^2$ and $1<p<\infty$. Consider also a sequence $\beta$ for which there exist an exponent $\delta>-\frac{1}{2}$ and constants $C_1,C_2>0$ such that
    \[
    C_1 (n+1)^\delta \leq |\beta_n| \leq C_2(n+1)^\delta, \qquad n\geq 0.
    \]
    Then, the factorable matrix $F_{\alpha,\beta}$ belongs to the Schatten class $\mathcal{S}^p(\ell^2)$ if and only if 
    \[
    \sum_{k=0}^\infty \left(2^{(2\delta+1)k}\sum_{j=2^k}^{2^{k+1}-1} |\alpha_j|^2\right)^{\frac{p}{2}}<\infty.
    \]
\end{theorem}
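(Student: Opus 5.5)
We reduce Theorem \ref{T:betapolynomial} to a dyadic block decomposition of $F_{\alpha,\beta}$ and then use Schatten norm estimates for each block. Since $\delta>-\frac12$, we have $B_m:=\sum_{j\le m}|\beta_j|^2\asymp (m+1)^{2\delta+1}$, so $\beta\notin\ell^2$. Write $I_k=[2^k,2^{k+1})$, $A_k=\sum_{j\in I_k}|\alpha_j|^2$ and $\sigma_k=2^{(2\delta+1)k}A_k$; the goal is $F_{\alpha,\beta}\in\mathcal S^p$ iff $(\sigma_k)\in\ell^{p/2}$. Because $F_{\alpha,\beta}\in\mathcal S^p$ iff $F^*F\in\mathcal S^{p/2}$, it is convenient to compute
\[
(F^*F)_{ij}=\bar\beta_i\beta_j\,T_{\max(i,j)},\qquad T_m:=\sum_{k\ge m}|\alpha_k|^2,
\]
and we may assume $\beta_j>0$ by conjugating with a diagonal unitary.

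\textbf{Sufficiency.} We decompose $F=\sum_k F_k$, where $F_k=P_kF$ and $P_k$ projects onto the rows in $I_k$. Each $F_k$ has orthogonal ranges across $k$, so $F^*F=\sum_kF_k^*F_k$. Each $F_k^*F_k$ is positive with trace
\[
\sum_{n\in I_k}|\alpha_n|^2B_n\asymp\sigma_k .
\]
For $p\le2$, that is $p/2\le1$, the $p/2$-quasi-triangle inequality for positive operators gives
\[
\|F^*F\|_{p/2}^{p/2}\le\sum_k\|F_k^*F_k\|_{p/2}^{p/2}\le\sum_k(\operatorname{tr}F_k^*F_k)^{p/2}\lesssim\sum_k\sigma_k^{p/2},
\]
where the middle step uses $\|\cdot\|_{p/2}\le\|\cdot\|_1$. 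For $p>2$ this simple route does not work. Instead, split $F_k$ further by columns into the diagonal part $D_k$ (columns in $I_k$) and the part supported on columns $<2^k$. The off-diagonal block $F_k^{\rm off}$ is rank one: it is $\alpha|_{I_k}\otimes\beta|_{[0,2^k)}$, with norm $\sqrt{A_kB_{2^k}}\asymp\sigma_k^{1/2}$. These rank-one pieces have orthogonal ranges, and their Gram matrix $\big(\langle\beta|_{[0,2^k)},\beta|_{[0,2^l)}\rangle\sqrt{A_kA_l}\big)$ has entries $\asymp\sqrt{A_kA_l}\,2^{(2\delta+1)\min(k,l)}$. Normalising by $\sigma$, this is a geometrically decaying kernel $2^{-(\delta+\frac12)|k-l|}\sigma_k^{1/2}\sigma_l^{1/2}$, and Schur-type estimates for such kernels on $\ell^{p/2}$ give the required bound. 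The diagonal blocks $D_k$ act on mutually orthogonal subspaces, so
\[
\|\bigoplus D_k\|_p^p=\sum_k\|D_k\|_p^p\le\sum\|D_k\|_2^p\lesssim\sum_k\sigma_k^{p/2}.
\]

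\textbf{Necessity.} Compress $F$ by orthogonal projections that isolate one rank-one block per $k$. Let $Q_k$ project onto rows in $I_{k}$ and $R_k$ onto columns in $I_{k-1}$. Then $Q_kFR_k=\alpha|_{I_k}\otimes\beta|_{I_{k-1}}$ is rank one with norm $\sqrt{A_k\sum_{I_{k-1}}\beta_j^2}\asymp\sigma_k^{1/2}$; this uses the lower bound $|\beta_n|\ge C_1(n+1)^\delta$ to get $\sum_{I_{k-1}}\beta_j^2\asymp2^{(2\delta+1)k}$. Restrict to even $k$ (and separately to odd $k$) so that the row blocks and the column blocks are pairwise disjoint. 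The operator $\sum_{k\ \rm even}Q_kFR_k$ is then a block-diagonal compression $QFR$ of $F$, where $Q=\sum Q_k$ and $R=\sum R_k$ are projections. Hence
\[
\sum_{k\ \rm even}\sigma_k^{p/2}\lesssim\|QFR\|_p^p\le\|F\|_p^p,
\]
and the same holds for odd $k$. Here we use that a direct sum of rank-one operators has $p$-norm equal to the $\ell^p$ norm of their norms.

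The main obstacle is the sufficiency direction for $p>2$: controlling the interaction between the rank-one off-diagonal pieces, which all share initial segments of $\beta$. The step to verify carefully is the $\mathcal S^{p/2}$ estimate for the matrix $\big(2^{-(\delta+\frac12)|k-l|}\sigma_k^{1/2}\sigma_l^{1/2}\big)_{k,l}$. One candidate approach is interpolation between $p=2$ (Hilbert–Schmidt, where the bound is direct) and $p=\infty$ (the boundedness in Theorem \ref{T:factorablebound}). Alternatively, one can write the matrix as $\sum_{r}$ of shifted diagonals and use the triangle inequality in $\mathcal S^{p/2}$, since each shifted diagonal has norm $\le 2^{-(\delta+\frac12)r}\|\sigma\|_{p/2}$ by Cauchy–Schwarz; this is geometrically summable precisely because $\delta>-\frac12$.
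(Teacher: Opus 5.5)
\textbf{Main gap: sufficiency for $1<p<2$.} In the chain
\[
\|F^*F\|_{p/2}^{p/2}\le\sum_k\|F_k^*F_k\|_{p/2}^{p/2}\le\sum_k(\operatorname{tr}F_k^*F_k)^{p/2},
\]
the second step needs $\|T\|_{p/2}\le\|T\|_1$. For $p/2<1$ this inequality goes the other way: $\|T\|_{p/2}^{p/2}=\sum_i s_i(T)^{p/2}\ge\bigl(\sum_i s_i(T)\bigr)^{p/2}$, with equality only when $T$ has rank one.

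Your argument never uses $p>1$, and the equivalence actually fails at $p=1$. Take $\beta\equiv1$ and $\alpha_n=c_k$ for $n\in I_k$, with $c_k2^k=k^{-3/2}$. Then $\sum_k\sigma_k^{1/2}=\sum_k k^{-3/2}<\infty$. But the diagonal block $D_k$ is $c_k$ times the $2^k\times2^k$ lower-triangular all-ones matrix, whose singular values are $\asymp 2^k/j$. Hence $\|D_k\|_1\asymp c_k2^k k=k^{-1/2}$, and the pinching inequality gives $\|F\|_1\ge\sum_k\|D_k\|_1=\infty$.

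So the missing ingredient is a within-block estimate $\|D_k\|_p\le C_p\,\sigma_k^{1/2}$ for $1<p<2$, and this is exactly where $p>1$ must enter. One way to get it is to split the triangle of $D_k$ dyadically. At level $m$ you get $2^m$ rank-one rectangles $\alpha|_{K^+}\otimes\beta|_{K^-}$, where $K\subset I_k$ is dyadic of length $2^{k-m}$ and $K^-,K^+$ are its halves. These rectangles have disjoint rows and columns, so by H\"older their sum has $\mathcal S^p$ norm $\lesssim\sigma_k^{1/2}2^{-m(1-1/p)}$. This is summable in $m$ exactly when $p>1$. You should then use the splitting $F=X+\bigoplus_k D_k$ (up to a finite-rank term) for every $p$, rather than the $F^*F$ route.

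\textbf{Necessity and the off-diagonal part.} In the necessity argument, $\sum_{k\text{ even}}Q_kFR_k$ is not the compression $QFR$. The cross blocks $Q_kFR_l$ (rows $I_k$, columns $I_{l-1}$) with $l<k$ consist of the entries $\alpha_n\beta_j$ with $j<n$, which are generally nonzero. So $QFR$ is block lower-triangular, and restricting to even $k$ does not change this. The inequality $\sum_k\sigma_k^{p/2}\lesssim\|F\|_p^p$ is still true, but it comes from the two-sided pinching inequality $\|\sum_kQ_kTR_k\|_p\le\|T\|_p$ for $p\ge1$. This is obtained by averaging $U_\theta T W_\theta$ over suitable diagonal unitaries, and it makes the parity split unnecessary.

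For the off-diagonal part $X$, Schur's test bounds operator norms, not Schatten norms. Your shifted-diagonal argument does work, though. More simply, write $X=\sum_{r\ge1}X_r$ with $X_r=\sum_k\alpha|_{I_k}\otimes\beta|_{I_{k-r}}$. Each $X_r$ is a direct sum of rank-one operators with $\|X_r\|_p\lesssim 2^{-(\delta+\frac12)r}\|\sigma\|_{p/2}^{1/2}$, which handles every $p\ge1$ at once; note also that $X_1$ is exactly the pinching used for necessity. So the case you flagged as the main obstacle ($p>2$) is the easy one, and the real content lies in the case you treated as trivial.

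With these repairs you would have a self-contained dyadic proof. It is quite different from the paper's route, which invokes the $(\epsilon,L)$-sequence characterization (the paper's $\sigma\in\ell^p$ on the blocks $[\xi_k,\xi_{k+1}-1]$). The paper then uses $\xi_k\asymp 2^{k/(2\delta+1)}$ and a Besov-type change of dyadic base.
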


Notice that for $\delta=0$ one obtains also the Rhaly matrices. In this case, one recovers the statement of Theorem 1.3 of \cite{BDSjlms2025}. 

{An important consequence of Theorem \ref{T:betapolynomial} is the following: we are able to characterize the Schatten class membership of Rhaly operators acting on weighted Bergman and Dirichlet spaces.}

\begin{theorem} \label{T:SchattenBergman}
    Consider a sequence $\eta$ and a parameter $1<p<\infty$. The following are equivalent.
    \begin{itemize}
        \item For $\gamma>-1$, the Rhaly operator $R_\eta$ is in the Schatten class of the weighted Bergman space $\mathcal{S}^p(A^2_\gamma)$.
        \item For $0<\alpha\leq 1$, the Rhaly operator $R_\eta$ is in the Schatten class of the weighted Dirichlet space $\mathcal{S}^p(\mathcal{D}_\alpha)$.
        \item The sequence $\eta$ satisfies the summability condition
     \begin{equation} \label{E:SchattenBergman}
         \sum_{k=0}^\infty \left(2^k\sum_{j=2^k}^{2^{k+1}-1} |\eta_j|^2\right)^{\frac{p}{2}}<\infty. 
     \end{equation}
    \end{itemize} 
\end{theorem}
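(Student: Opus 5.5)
The plan is to reduce $R_\eta$ on $H^2(\omega)$ to a factorable matrix on $\ell^2$ and then apply Theorem \ref{T:betapolynomial}.

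\textbf{Reduction to a factorable matrix.} The map $U\colon H^2(\omega)\to\ell^2$, $U(\sum a_n z^n)=(a_n\sqrt{\omega_n})_n$, is unitary. Hence $R_\eta\in\mathcal{S}^p(H^2(\omega))$ if and only if $UR_\eta U^{-1}\in\mathcal{S}^p(\ell^2)$. A direct computation gives
\[
(UR_\eta U^{-1}x)_n=\sqrt{\omega_n}\,\eta_n\sum_{j\le n}\frac{x_j}{\sqrt{\omega_j}},
\]
so $UR_\eta U^{-1}=F_{\alpha,\beta}$ with $\alpha_n=\eta_n\sqrt{\omega_n}$ and $\beta_j=\omega_j^{-1/2}$.

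\textbf{Bergman case.} Take $\omega_n=(n+1)^{-\gamma-1}$. Then $\beta_n=(n+1)^{\delta}$ with $\delta=(\gamma+1)/2>0>-\tfrac12$. Theorem \ref{T:betapolynomial} requires $\alpha\in\ell^2$, so I first reduce to that case.
\begin{itemize}
\item If $R_\eta\in\mathcal{S}^p$, then $R_\eta$ is bounded. Taking $m=0$ in $K_2$ of Theorem \ref{T:boundcompFactorableweighted} gives $\sum_k|\eta_k|^2\omega_k<\infty$, i.e. $\alpha\in\ell^2$.
\item Conversely, if \eqref{E:SchattenBergman} holds, then by Hölder (or simply $p/2$ vs.\ $1$ and the bounded dyadic blocks) $\sum_{j\in I_k}|\eta_j|^2\lesssim 2^{-k}$ on each dyadic block $I_k=[2^k,2^{k+1})$. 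Since $\omega_j\approx 2^{-k(\gamma+1)}$ on $I_k$, this gives $\sum_k 2^{-k(\gamma+1)}2^{-k}<\infty$, so again $\alpha\in\ell^2$.
\end{itemize}
On $I_k$ we have $\omega_j\approx 2^{-k(\gamma+1)}$ uniformly. Therefore
\[
2^{(2\delta+1)k}\sum_{j\in I_k}|\alpha_j|^2\approx 2^{(\gamma+2)k}\,2^{-(\gamma+1)k}\sum_{j\in I_k}|\eta_j|^2=2^k\sum_{j\in I_k}|\eta_j|^2 .
\]
So the condition of Theorem \ref{T:betapolynomial} is exactly \eqref{E:SchattenBergman}, up to constants depending only on $\gamma$ and $p$.

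\textbf{Dirichlet case.} Take $\omega_n=n^{1-\alpha}$ for $n\ge1$. The value $\omega_0$ is harmless: it changes $\beta_0$ by a fixed constant and hence satisfies the two-sided bounds of Theorem \ref{T:betapolynomial}. (One can alternatively argue it is a rank-one perturbation, though the $\beta_0$ change actually affects a whole column.) Then $\beta_n\approx(n+1)^{\delta}$ with $\delta=(\alpha-1)/2$. Since $0<\alpha\le1$, we get $\delta\in(-\tfrac12,0]$, which is exactly why the range of $\alpha$ is restricted. The same dyadic computation, now with $2\delta+1=\alpha$ and $\omega_j\approx 2^{k(1-\alpha)}$ on $I_k$, gives
\[
2^{\alpha k}\,2^{(1-\alpha)k}\sum_{j\in I_k}|\eta_j|^2=2^k\sum_{j\in I_k}|\eta_j|^2 .
\]
Again this is \eqref{E:SchattenBergman}.

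\textbf{The $\alpha\in\ell^2$ hypothesis in the Dirichlet case.} The necessity direction is as before: $\mathcal{S}^p$ implies bounded, which implies $K_2<\infty$, which gives $\alpha\in\ell^2$. For sufficiency, \eqref{E:SchattenBergman} gives $\sum_{j\in I_k}|\eta_j|^2\lesssim 2^{-k}$ on each block. Then
\[
\sum_k 2^{k(1-\alpha)}\sum_{j\in I_k}|\eta_j|^2\lesssim\sum_k 2^{-k\alpha}<\infty,
\]
using $\alpha>0$. This is a second place where $\alpha>0$ is needed.

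\textbf{Main obstacle.} The main difficulty is the bookkeeping around the $\alpha\in\ell^2$ hypothesis and the edge index $n=0$. The core equivalence is immediate from Theorem \ref{T:betapolynomial} once the unitary reduction is in place.
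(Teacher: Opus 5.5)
Your proposal is correct and follows the paper's proof. You identify $R_\eta$ on $H^2(\omega)$ with $F_{\alpha,\beta}$, where $\alpha_k=\eta_k\sqrt{\omega_k}$ and $\beta_j=\omega_j^{-1/2}$, apply Theorem~\ref{T:betapolynomial} with $\delta=(\gamma+1)/2$ or $\delta=(\alpha-1)/2$, and use that $\omega_j$ is comparable to a constant on each dyadic block to reduce its condition to \eqref{E:SchattenBergman}. Your added check that $\alpha\in\ell^2$ holds in both directions (via $K_2$ at $m=0$, and via boundedness of the terms of \eqref{E:SchattenBergman}) supplies a hypothesis of Theorem~\ref{T:betapolynomial} that the paper uses without comment.
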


Notice that the condition \eqref{E:SchattenBergman} does not depend on the parameters $\gamma$ or $\alpha$, and it is the same one that characterizes the $p$-Schatten class of the Rhaly operator $R_\eta$ acting on the classical Hardy space $H^2$. For the classical Dirichlet space $\mathcal{D}_0$, we will show that this independency breaks down.

The paper is organized as follows.
In Section \ref{S:prelim} we collect the necessary background and technical preliminaries. 
Section \ref{S:Schatten} is devoted to the Schatten class theory of
factorable matrices. 
In Section \ref{S:weightedHardy} we turn to weighted Hardy spaces.
We compute the matrix representation of the Rhaly operator $R_\eta$ on
$H^2(\omega)$ with respect to the natural orthonormal basis
$\{\varphi_n\}$, and show that {$R_\eta$ is} unitarily equivalent to {a factorable
matrix}.
This identification reduces Theorems \ref{T:boundcompFactorableweighted}
and \ref{T:SchattenBergman} to the abstract results of
Section \ref{S:Schatten}, and allows us to recover the boundedness
characterizations of Galanopoulos--Girela \cite{GG2025} and
Blasco--Galanopoulos--Girela \cite{Blasco2026Cesarotype} for the weighted Bergman and Dirichlet spaces. 



\section{Preliminaries}\label{S:prelim}

We briefly recall some standard definitions and properties. Given a Hilbert space $H$, we denote by $\mathcal{B}(H)$ the space of bounded linear operators on $H$ and by $\|\cdot\|_{\mathcal{B}(H)}$ the operator norm, defined as
\[
\|T\|_{\mathcal{B}(H)} := \sup_{\|f\|_H = 1} \|Tf\|_H.
\]
Let $\mathcal{F}_n(H)$ be the set of all bounded linear operators on $H$ whose range has dimension less than or equal to $n$, and let $\mathcal{F}(H)$ be the set of all bounded finite-range operators on $H$. Given $T \in \mathcal{B}(H)$, its approximation numbers are defined as
\[
a_{n+1}(T) := \inf_{P \in \mathcal{F}_n(H)} \|T-P\|_{\mathcal{B}(H)}, \qquad n \in \mathbb{N}.
\]    
Note that $a_1(T) = \|T\|_{\mathcal{B}(H)}$. For $1 < q < \infty$, we say that the operator $T$ belongs to the Schatten class $\mathcal{S}^q(H)$ if and only if the sequence of its approximation numbers $(a_n(T))_{n\in\mathbb{N}}$ belongs to $\ell^q$. For further details, we refer the reader to \cite[Chapter 1]{zhu2007operator}.

We clarify the notation for the rest of the paper. We will be dealing with sequences with two indices. When we say that a sequence $(\alpha_{k,m})_{k,m\in\mathbb{N}}$ belongs to $\ell^p_k$, we are saying that it is $p$-summable with respect to the index $k$. Moreover, we write
\[
\|\alpha_{k,m}\|_{\ell^p_k}^p:=\sum_{k\in\mathbb{N}}|\alpha_{k,m}|^p, \qquad m\in\mathbb{N}.
\]
For $p>0$, we also denote the standard $\ell^p$ (quasi-)norm by $\|\cdot\|_p$. Given two functions $f,g$ defined on $\mathbb{N}$, by $f=O(g)$ we mean that there exists a positive constant $C$ such that $|f(n)|\leq C|g(n)|$, either for every $n\in\mathbb{N}$ or for $n$ big enough (it will be clear from context). If $f=O(g)$ and $g=O(f)$, we write that $f\asymp g$.
Let $e_n = (\delta_{n,k})_{k\in\mathbb{N}}$ denote the $n$-th vector of the canonical orthonormal basis of $\ell^2$.

\subsection{The sequence $\xi$}
We adapt techniques originally developed by Edmunds, Evans, and Harris in \cite{Edmunds1988,Edmunds1997} for studying integral operators on $L^p(\mathbb{R})$ to our discrete setting. The same approach was used in \cite{BDSjlms2025} to study Rhaly operators on the Hardy space $H^2$, which corresponds to the specific case where $\beta \equiv 1$. Adapting these techniques to the more general case of factorable matrices (where the sequence $\beta$ is not identically one) presents several technical obstacles. As we will explain, an additional assumption on the sequence $\beta$ will be required {in some instances}.

In what follows, we exclusively consider sequences $\beta \notin \ell^2$. We introduce the non-decreasing function
\[
B(n) := \sum_{j=0}^n |\beta_j|^2, \qquad n \in \mathbb{N},
\]
and the corresponding sequence  
\begin{equation*}
    \xi_k := \inf\{n \in \mathbb{N} \colon B(n) > 2^k\}.
\end{equation*}
The natural number $\xi_k$ is always well-defined because $\lim_{n \to \infty} B(n) = +\infty$ for $\beta \notin \ell^2$. {It is possible that $\xi_k = \xi_{k+1}$. We may even have an arbitrarily large number of consecutive $\xi_k$ values that coincide, though not infinitely many.  For every $k$, the definition of infimum yields
\(
B(\xi_k-1) \leq 2^k < B(\xi_k).
\)
}

{
Considering then another sequence $\alpha\in\ell^2$, we  define the sequence $\sigma = (\sigma_k)_{k\in\mathbb{N}}$ as 
\begin{equation}\label{D:defsigma}
\sigma_k := \left(\sum_{j=\xi_k}^{\xi_{k+1}-1} B(j)|\alpha_j|^2\right)^{\frac{1}{2}}.  
\end{equation}
Whenever $\xi_k = \xi_{k+1}$, we set $\sigma_k = 0$ by convention. The assumption that $\alpha\in\ell^2$ is not restrictive. Assuming without loss of generality that $\beta_0\neq 0$, the identity $F_{\alpha,\beta}e_0=\beta_0\alpha$ holds true. In particular, $\alpha\in\ell^2$ is a necessary condition in order to have $F_{\alpha,\beta}\in\mathcal{B}(\ell^2)$.
}

By the inequalities \(
B(\xi_k-1) \leq 2^k < B(\xi_k)
\) and the monotonicity of $B$, we obtain the bounds
\begin{equation}\label{E:sigmabounds}
        2^k \sum_{j=\xi_k}^{\xi_{k+1}-1}|\alpha_j|^2 \leq \sigma_k^2 \leq  2^{k+1} \sum_{j=\xi_k}^{\xi_{k+1}-1}|\alpha_j|^2.
    \end{equation}

When $\beta \equiv 1$, we recover the exact quantities considered in \cite{BDSjlms2025}: $B(n) = n+1$ for $n \in \mathbb{N}$, which implies
\[
\xi_k = \inf\{n \in \mathbb{N} \colon n+1 > 2^k\} = 2^k, \qquad k \in \mathbb{N},
\]
and
\[
\sigma_k = \left(\sum_{j=2^k}^{2^{k+1}-1} (j+1)|\alpha_j|^2\right)^{\frac{1}{2}}, \qquad k \in \mathbb{N}.  
\]

We relate the boundedness and compactness of $F_{\alpha,\beta}$ to the sequence $\sigma = (\sigma_k)_k$. 

\begin{proposition}\label{P:boundRasigma}
The factorable matrix $F_{\alpha,\beta}$ is bounded on $\ell^2$ if and only if the sequence $\sigma$ is bounded. In particular, we have the estimates
\[
\|\sigma\|_\infty \leq \sqrt{2}\|F_{\alpha,\beta}\|_{\mathcal{B}(\ell^2)} \leq 8\|\sigma\|_{\infty}.
\] 
Moreover, $F_{\alpha,\beta}$ is compact on $\ell^2$ if and only if $\lim_{k \to \infty} \sigma_k = 0$.
\end{proposition}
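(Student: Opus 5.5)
The plan is to reduce everything to Theorem \ref{T:factorablebound}: we compare the sequence $\sigma$ with $\rho$ and show
\[
\tfrac{1}{\sqrt2}\|\sigma\|_\infty\le\|\rho\|_\infty\le 2\sqrt2\,\|\sigma\|_\infty ,
\]
together with the statement that $\rho_m\to0$ if and only if $\sigma_k\to0$. Once this is in hand, the norm estimates follow at once from $\|\rho\|_\infty\le\|F_{\alpha,\beta}\|\le2\sqrt2\|\rho\|_\infty$: the left one gives $\|\sigma\|_\infty\le\sqrt2\|F_{\alpha,\beta}\|$, and the right one gives $\|F_{\alpha,\beta}\|\le 2\sqrt2\cdot2\sqrt2\|\sigma\|_\infty=8\|\sigma\|_\infty$. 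Note that $\rho_m^2=B(m)\sum_{k\ge m}|\alpha_k|^2$.

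\emph{Lower bound for $\rho$.} Fix $k$ with $\xi_k<\xi_{k+1}$; otherwise $\sigma_k=0$ and there is nothing to prove. By \eqref{E:sigmabounds},
\[
\sigma_k^2\le 2^{k+1}\sum_{j=\xi_k}^{\xi_{k+1}-1}|\alpha_j|^2\le 2\,B(\xi_k)\sum_{j\ge\xi_k}|\alpha_j|^2=2\rho_{\xi_k}^2 ,
\]
using $2^k<B(\xi_k)$. Hence $\sigma_k\le\sqrt2\,\rho_{\xi_k}$, which gives $\|\sigma\|_\infty\le\sqrt2\|\rho\|_\infty$. It also shows that $\rho\to0$ implies $\sigma\to0$, since $\xi_k\to\infty$.

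\emph{Upper bound for $\rho$.} This is the main step. Given $m$, let $k$ be such that $\xi_k\le m<\xi_{k+1}$; such a $k$ exists for $m\ge\xi_0$, and $m<\xi_0$ means $B(m)\le1$, a case handled the same way with the trivial block. Since $m\le\xi_{k+1}-1$, we have $B(m)\le B(\xi_{k+1}-1)\le 2^{k+1}$. Splitting the tail into blocks,
\[
\rho_m^2\le 2^{k+1}\sum_{l\ge k}\sum_{j=\xi_l}^{\xi_{l+1}-1}|\alpha_j|^2\le 2^{k+1}\sum_{l\ge k}2^{-l}\sigma_l^2\le 4\sup_{l\ge k}\sigma_l^2 ,
\]
where the middle inequality is the left half of \eqref{E:sigmabounds}. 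Empty blocks contribute nothing, so repeated values of $\xi$ cause no trouble. This yields $\|\rho\|_\infty\le2\|\sigma\|_\infty$, which is even better than needed. It also shows that $\sigma_l\to0$ forces $\rho_m\to0$, because $k\to\infty$ as $m\to\infty$. Here $\sum_l$ is a geometric series, and $\alpha\in\ell^2$ guarantees that the tails are finite.

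Combining the two bounds with Theorem \ref{T:factorablebound} gives both the boundedness criterion with its constants and the compactness criterion. The only delicate point is the bookkeeping of the indices when several $\xi_k$ coincide, together with the initial segment $m<\xi_0$. I would treat both by noting that every $m\in\mathbb{N}$ lies in exactly one nonempty block $[\xi_k,\xi_{k+1})$, after setting $\xi_{-1}=0$ and defining a corresponding $\sigma_{-1}$, which is controlled by $B\le1$ there.
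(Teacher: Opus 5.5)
Your argument is the paper's argument. You use the same two comparisons, $\sigma_k\le\sqrt2\,\rho_{\xi_k}$ and $\rho_m^2\le 4\sup_{l\ge k}\sigma_l^2$ for $m\in[\xi_k,\xi_{k+1}-1]$, and combine them with Theorem~\ref{T:factorablebound}.

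\textbf{Constants.} The middle term of the statement is $\sqrt2\|F_{\alpha,\beta}\|_{\mathcal{B}(\ell^2)}$, so the upper estimate means $\|F_{\alpha,\beta}\|\le 4\sqrt2\|\sigma\|_\infty$. The bound $\|\rho\|_\infty\le 2\sqrt2\|\sigma\|_\infty$ announced in your plan would only give $\|F_{\alpha,\beta}\|\le 8\|\sigma\|_\infty$, which is too weak. The bound $\|\rho\|_\infty\le 2\|\sigma\|_\infty$ that you actually derive gives exactly the stated constant; it is what is needed, not better than needed.

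\textbf{The initial segment.} The case $m<\xi_0$ cannot be absorbed as you propose. The coefficients $\alpha_j$ with $j<\xi_0$ enter no $\sigma_k$ with $k\ge 0$, so your auxiliary $\sigma_{-1}$ is not controlled by $\|\sigma\|_\infty$. The information $B\le 1$ only bounds it by $\bigl(\sum_{j<\xi_0}|\alpha_j|^2\bigr)^{1/2}$.

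No argument can close this, because the estimate is false there. Take $\beta\equiv 1$ and $\alpha=e_0$. Then $\xi_k=2^k$, so $\xi_0=1$, and $\sigma\equiv 0$. But $F_{\alpha,\beta}$ has the single nonzero entry $1$ at position $(0,0)$, so $\sqrt2\|F_{\alpha,\beta}\|\le 8\|\sigma\|_\infty$ fails. The paper's proof has the same blind spot: it only treats $m\ge 1$ lying in some $[\xi_k,\xi_{k+1}-1]$.

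What the argument does prove is $\sup_{m\ge\xi_0}\rho_m\le 2\|\sigma\|_\infty$. So the norm estimate holds when, for instance, $\xi_0=0$. For the finitely many $m<\xi_0$ you have $\rho_m\le\|\alpha\|_2$, because $B(m)\le 1$ there. This suffices for both qualitative equivalences (boundedness and compactness), since these depend only on the tail of $\rho$.
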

  
\begin{proof}
Let $m \geq 1$, and let $k$ be the unique natural number such that $m \in [\xi_k, \xi_{k+1}-1]$. If $\xi_l = \xi_{l+1} = \dots = \xi_{l+m}$, we choose $\xi_k = \xi_{l+m}$. Then,
\begin{align*}
   \left(\sum_{j=m}^\infty |\alpha_j|^2\right)\left(\sum_{i=0}^m |\beta_i|^{2}\right) &\leq 2^{k+1}\sum_{j=\xi_k}^\infty |\alpha_j|^2 = 2^{k+1}\sum_{h=k}^\infty\sum_{j=\xi_h}^{\xi_{h+1}-1} |\alpha_j|^2\\
   &\leq  2^{k+1}\sum_{h=k}^\infty 2^{-h} \sigma_h^2 \leq 4\sup_{h\geq k} \sigma_h^2.
\end{align*}
By Theorem \ref{T:factorablebound}, if $\sigma$ is bounded, then $F_{\alpha,\beta}$ is bounded, and 
$\|F_{\alpha,\beta}\|_{\mathcal{B}(\ell^2)} \leq 4\sqrt{2}\|\sigma\|_{\infty}$. Furthermore, if $\lim_{k \to \infty} \sigma_k = 0$, then $F_{\alpha,\beta}$ is compact. 

On the other hand, notice that $\sigma_k \leq 2 \rho_{\xi_k}$, since
 \[
 \sigma_{k}^{2} \leq 2^{k+1}\sum_{j=\xi_k}^{\infty}|\alpha_j|^{2} = 2\frac{2^{k}B(\xi_k)}{B(\xi_k)}\sum_{j=\xi_k}^{\infty}|\alpha_j|^{2} \leq 2\left(\sum_{i=0}^{\xi_k} |\beta_i|^2\right)\left(\sum_{j=\xi_k}^{\infty}|\alpha_j|^{2}\right).
 \]
Again by Theorem \ref{T:factorablebound}, if $F_{\alpha,\beta}$ is bounded, we have $\|\sigma\|_\infty \leq \sqrt{2} \|F_{\alpha,\beta}\|_{\mathcal{B}(\ell^2)}$, and if $F_{\alpha,\beta}$ is compact, then $\lim_{k \to \infty} \sigma_k = 0$.
\end{proof}

\subsection{The $(\epsilon,L)$-sequences}
We fix two sequences $\alpha \in \ell^2$ and $\beta \notin \ell^2$. In what follows, we consider natural numbers $a \leq b$ and intervals $I = [a,b]$. We introduce the notation
\[
\|f\|_{2,I} := \left(\sum_{k \in I}|f(k)|^2 \right)^{\frac{1}{2}}.
\]
Given an interval $I = [a,b]$ and $f \in \ell^2$, we set 
\begin{align*}
    l(I,f) &:= \sum_{k\in I}\sum_{n\in I\setminus\{k\}}\left|\alpha_k \alpha_n\sum^{\max(k,n)}_{j=\min(k,n)+1}f(j)\beta_j\right|^{2}
\end{align*}
and
\begin{equation}\label{L(I) def} 
    L(I) := \left(\sup_{\|f\|_{2,I} \leq 1} \frac{l(I,f)}{\sum_{k\in I}|\alpha_k|^2} \right)^{\frac{1}{2}}.
\end{equation}
If $\alpha_k = 0$ for every $k \in I$, we set $L(I) = 0$. According to this definition, $L(\{k\}) = 0$ for every singleton $\{k\}$. This is reminiscent of the continuous setting of \cite{Edmunds1988,Edmunds1997}, in which these techniques were first used. 

The quantity $L([a, b])$ decreases as $a$ increases and it increases as $b$ increases: the proof to this statement is analogous to that of Lemma 3.2 in \cite{BDSjlms2025}, the presence of $\beta_j$ does not change anything. To handle infinite intervals, we set
\[
L([a, \infty)) := \lim_{b \to \infty} L([a, b]) = \sup_{b \geq a} L([a,b]).
\] 
Clearly, the finiteness of $L([0,\infty))$ depends only on the tail behavior: $L([0,\infty)) < \infty$ if and only if there exists an $a \in \mathbb{N}$ such that $L([a,\infty)) < \infty$. This quantity is intimately related to the boundedness of $F_{\alpha,\beta}$.

\begin{lemma}\label{L:Fuv well def}
    If $L([0,\infty))<\infty,$ then $F_{\alpha,\beta}f\in \ell^2$ for every $f\in\ell^2$, and $F_{\alpha,\beta}\colon\ell^2\to\ell^2$ is bounded.
\end{lemma}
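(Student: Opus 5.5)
The idea is to show directly that the truncated sums $\sum_{k\le b}|(F_{\alpha,\beta}f)_k|^2$ are bounded by $C\|f\|_2^2$, with $C$ independent of $b$. We do this by comparing each row with a fixed \emph{anchor} row, which the quantity $l(I,f)$ controls.

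First, every entry $(F_{\alpha,\beta}f)_k=\alpha_k S_k$, with $S_k:=\sum_{j=0}^k\beta_j f(j)$, is a finite sum. So $F_{\alpha,\beta}f$ is a well-defined sequence, and only the $\ell^2$ estimate is at stake. If $\alpha\equiv 0$ there is nothing to prove. Otherwise, let $m$ be the smallest index with $\alpha_m\neq 0$; the rows with $k<m$ vanish. For $k>m$ we have
\[
S_k-S_m=\sum_{j=m+1}^{k}\beta_j f(j),
\]
which is exactly the inner sum in the definition of $l(I,f)$ for the pair $(k,m)$ whenever $m,k\in I$.

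Fix $b>m$ and set $I=[0,b]$. Apply the definition of $L(I)$ to the restriction of $f$ to $I$; only the values of $f$ on $I$ enter, since every index $j$ in the inner sums lies in $(\min,\max]\subset I$. Keeping only the terms with $n=m$ gives
\[
|\alpha_m|^2\sum_{k\in I,\,k\neq m}|\alpha_k|^2|S_k-S_m|^2\le l(I,f)\le L(I)^2\Big(\sum_{k\in I}|\alpha_k|^2\Big)\|f\|_{2,I}^2\le L([0,\infty))^2\,\|\alpha\|_2^2\,\|f\|_2^2 .
\]
This uses the monotonicity of $L([a,b])$ in $b$ and the standing assumption $\alpha\in\ell^2$.

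Next, write $|S_k|^2\le 2|S_k-S_m|^2+2|S_m|^2$. By Cauchy--Schwarz, $|S_m|^2\le B(m)\|f\|_2^2$. Therefore
\[
\sum_{k=0}^{b}|\alpha_kS_k|^2\le \frac{2L([0,\infty))^2\|\alpha\|_2^2}{|\alpha_m|^2}\|f\|_2^2+2B(m)\|\alpha\|_2^2\|f\|_2^2 .
\]
Letting $b\to\infty$ gives $F_{\alpha,\beta}f\in\ell^2$ with $\|F_{\alpha,\beta}f\|_2\le C\|f\|_2$, where $C$ depends only on $L([0,\infty))$, $\alpha$ and $\beta_0,\dots,\beta_m$. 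So $F_{\alpha,\beta}$ is bounded.

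The only delicate point is that $l(I,f)$ controls differences $S_k-S_n$ rather than the sums $S_k$ themselves. Expanding $l(I,f)$ symmetrically would leave an uncontrolled ``mean'' term $\big|\sum_I|\alpha_k|^2S_k\big|^2$. Anchoring at the single row $m$ avoids that term: the leftover piece $S_m$ is a finite combination of values of $f$ and is trivially bounded by $\sqrt{B(m)}\,\|f\|_2$.
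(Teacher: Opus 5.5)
Your proof is correct and follows essentially the paper's argument. The paper likewise keeps only the terms of $l([0,b],f)$ paired with a fixed anchor row (row $0$, after assuming $\alpha_0\neq 0$ without loss of generality) and absorbs the leftover term $\alpha_n f(0)\beta_0$ using $\|\alpha\|_2$; your version is slightly more careful, anchoring at the first nonzero $\alpha_m$ instead of invoking a WLOG, and bounding $|S_m|^2\le B(m)\|f\|_2^2$ so that boundedness follows directly without the closed graph theorem.
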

\begin{proof}
    Without loss of generality, we can assume that $\alpha_0\neq 0$. Take $f$ in $\ell^2$ with norm $1$. For every $b>0$, setting $I:=[0,b]$, we have that
    \begin{align*}
        \bigg(\sum_{k=0}^\infty |\alpha_k|^2\bigg) L([0,+\infty))^2 &\geq l([0,b],f) \\
        &=\sum_{k=0}^b\sum_{n\in I\setminus\{k\}}\biggl|\alpha_k \alpha_n\sum^{\max(k,n)}_{j=\min(k,n)+1}f(j)\beta_j\biggr|^{2}\\
        &\geq \sum_{n=1}^b\biggl|\alpha_0 \alpha_n\sum^{n}_{j=1}f(j)\beta_j\biggr|^{2}\\
        &=|\alpha_0|^2 \sum_{n=1}^b \biggl|F_{\alpha,\beta}f(n)-\alpha_nf(0)\beta_0\biggr|^2.
    \end{align*}
    It follows that
    \begin{align*}
        \sum_{n=1}^b |F_{\alpha,\beta}f(n)|^2 &\leq \frac{2}{|\alpha_0|^{2}}\bigg(\sum_{k=0}^\infty |\alpha_k|^2\bigg) L([0,+\infty))^2 +2|f(0)\beta_0|^{2}\sum_{n=1}^b |\alpha_n|^2\\
        &\leq 2\|\alpha\|_2^2\left( |\alpha_0|^{-2} L([0,+\infty))^2+|f(0)\beta_0|^{2}\right)<\infty,
    \end{align*}
    for every $b>0$. Then, $F_{\alpha,\beta}f\in\ell^2$ and  $F_{\alpha,\beta}$ is bounded on $\ell^2$  by the closed graph theorem.
\end{proof}

We are now ready to introduce the $(\epsilon,L)$-sequences, a key tool developed in \cite{Edmunds1988,Edmunds1997} and adapted to this discrete setting in \cite{BDSjlms2025}.

\begin{definition} 
Given a real number $\epsilon > 0$, we define a sequence of natural numbers $c_k$ recursively: we set $c_0 = 0$, and for $k \geq 0$,
\begin{equation}\label{def of c_k}
     c_{k+1} = \inf\{t \in \mathbb{N} \colon t > c_{k}, \quad L([c_{k},t-1]) > \epsilon\}.
\end{equation}
We say that $\{c_k\}$ forms an $(\epsilon, L)$-sequence. 
\end{definition}

If there exists a natural number $N$ such that $c_N \in \mathbb{N}$ and $c_{N+1} = +\infty$, we say that the $(\epsilon, L)$-sequence $\{c_k\}_{k=0}^N$ is finite and has length $N$. Otherwise, the sequence is infinite.

The elements $c_k$ of an $(\epsilon,L)$-sequence have the following crucial properties:
\begin{itemize}
    \item $c_{k+1} > c_{k} + 1$ for every $k \in \mathbb{N}$;
    \item $L([c_k,c_{k+1}-2]) \leq \epsilon$;
    \item If $c_{k+1} \in \mathbb{N}$, then $L([c_k,c_{k+1}-1]) > \epsilon$.
\end{itemize}

We can now relate the properties of $F_{\alpha,\beta}$ to the length of the $(\epsilon,L)$-sequences. 

\begin{theorem}\label{T:boundRaeLseq} 
The following conditions are equivalent:
\begin{enumerate}[label=(\roman*)]
    \item The operator $F_{\alpha,\beta}$ is bounded on $\ell^2$.
    \item $L([0, \infty)) < \infty$.
    \item There exists an $(\epsilon,L)$-sequence of finite length.
    \item The sequence $\sigma$ is bounded.
\end{enumerate} 
\end{theorem}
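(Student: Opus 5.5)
We already have (i)$\Leftrightarrow$(iv) from Proposition \ref{P:boundRasigma} and (ii)$\Rightarrow$(i) from Lemma \ref{L:Fuv well def}. It therefore suffices to prove (i)$\Rightarrow$(ii) and (ii)$\Leftrightarrow$(iii). The equivalence (ii)$\Leftrightarrow$(iii) is essentially definitional, using the monotonicity of $L([a,b])$ in both endpoints.

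For (ii)$\Rightarrow$(iii), choose $\epsilon > L([0,\infty))$. Then $L([0,t-1]) \leq L([0,\infty)) < \epsilon$ for every $t$, so $c_1=+\infty$ and the $(\epsilon,L)$-sequence has length $0$.

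For (iii)$\Rightarrow$(ii), suppose $\{c_k\}_{k=0}^N$ is finite. Then $L([c_N,t-1])\leq\epsilon$ for all $t>c_N$, hence $L([c_N,\infty))\leq\epsilon<\infty$. By the remark that finiteness of $L([0,\infty))$ depends only on the tail, $L([0,\infty))<\infty$. If one prefers not to rely on that remark, one can argue directly. Split $[0,b]$ into $[0,c_N-1]$ and $[c_N,b]$. The pairs inside the finite block contribute a bounded amount, since $\sum_{j\le c_N}|f(j)\beta_j|\le \|f\|_2 B(c_N)^{1/2}$. For the cross pairs $k<c_N\le n$, split $\sum_{j=k+1}^n f(j)\beta_j$ into the part up to $c_N$ and the part from $c_N+1$ to $n$. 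The latter is controlled by the $\ell^2$ boundedness of $F_{\alpha,\beta}$ restricted to the tail, which itself is controlled by $L([c_N,\infty))$ through the computation in Lemma \ref{L:Fuv well def}. So either route works, and I would cite the tail remark.

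The main step is (i)$\Rightarrow$(ii). Assume $F_{\alpha,\beta}$ is bounded and fix $f$ with $\|f\|_{2,I}\le 1$, $I=[a,b]$. For $k<n$ in $I$ write $S(m)=\sum_{j=a}^{m} f(j)\beta_j$, so that $\sum_{j=k+1}^n f(j)\beta_j = S(n)-S(k)$. Then
\[
l(I,f)\le 2\sum_{k\in I}\sum_{n\in I\setminus\{k\}}|\alpha_k|^2|\alpha_n|^2\bigl(|S(n)|^2+|S(k)|^2\bigr)\le 4\Bigl(\sum_{k\in I}|\alpha_k|^2\Bigr)\sum_{n\in I}|\alpha_n S(n)|^2 .
\]
Extend $f$ by zero outside $I$ to get $\tilde f$. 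Then $\alpha_n S(n)=(F_{\alpha,\beta}\tilde f)(n)$ for $n\in I$, so $\sum_{n\in I}|\alpha_nS(n)|^2\le \|F_{\alpha,\beta}\|^2$. Dividing by $\sum_{k\in I}|\alpha_k|^2$ gives $L(I)\le 2\|F_{\alpha,\beta}\|_{\mathcal B(\ell^2)}$ uniformly in $I$. Hence $L([0,\infty))\le 2\|F_{\alpha,\beta}\|<\infty$.

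I expect the only delicate points to be bookkeeping: the convention $L(I)=0$ when $\alpha$ vanishes on $I$, and checking that the monotonicity of $L$ (cited from \cite{BDSjlms2025}) indeed carries over with the weights $\beta_j$, which the paper asserts. With these, the chain (i)$\Rightarrow$(ii)$\Rightarrow$(iii)$\Rightarrow$(ii)$\Rightarrow$(i)$\Leftrightarrow$(iv) closes the proof.
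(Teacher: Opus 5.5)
Your proof is correct. For the one nontrivial implication, (i)$\Rightarrow$(ii), it takes a different and much more elementary route than the paper. The paper does not bound $L(I)$ by $\|F_{\alpha,\beta}\|_{\mathcal{B}(\ell^2)}$ directly. It first establishes the comparison $\frac12 J(I)\le K(I)=\frac{1}{\sqrt2}L(I)\le 2J(I)$, adapted from Lemma 3.6 of \cite{BDSjlms2025} with Lemma 1' of \cite{Bennett2004} replacing an integral estimate. From this it derives the approximation-number lower bound (b), and then applies (b) with $N=1$. If $L([0,\infty))>\epsilon$, the $(\epsilon,L)$-sequence has length at least one, so $\|F_{\alpha,\beta}\|_{\mathcal{B}(\ell^2)}=a_1(F_{\alpha,\beta})\ge\epsilon/\sqrt2$, which gives $L([0,\infty))\le\sqrt2\|F_{\alpha,\beta}\|_{\mathcal{B}(\ell^2)}$. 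You bypass all of this. You observe that the quantities $\alpha_nS(n)$ in $l(I,f)$ are exactly the entries of $F_{\alpha,\beta}\tilde f$ on $I$, and the crude bound $|x-y|^2\le 2|x|^2+2|y|^2$ then gives $L(I)\le 2\|F_{\alpha,\beta}\|_{\mathcal{B}(\ell^2)}$ uniformly in $I$. Expanding the square instead gives the exact identity
\[
l(I,f)=2\Bigl(\sum_{k\in I}|\alpha_k|^2\Bigr)\sum_{n\in I}|\alpha_nS(n)|^2-2\Bigl|\sum_{n\in I}|\alpha_n|^2S(n)\Bigr|^2,
\]
which is the identity $K(I)=L(I)/\sqrt2$ in disguise and recovers the paper's constant $\sqrt2$. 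Your route makes the boundedness theorem self-contained: it does not depend on the only-sketched adaptation of the comparison between $J$, $K$ and $L$, nor on item (b). The paper's route costs nothing extra once that machinery is in place, because the same estimates are needed anyway for the singular-value bounds behind Theorem \ref{T:compRaeLseq2} and the Schatten class results. Your treatment of (ii)$\Leftrightarrow$(iii) is exactly what the paper means by ``follows by the definitions'': a length-zero sequence when $\epsilon>L([0,\infty))$, and in the other direction $L([c_N,\infty))\le\epsilon$ combined with the tail remark.
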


Before discussing the proof, we also state a parallel result characterizing the compactness of $F_{\alpha,\beta}$.

\begin{theorem} \label{T:compRaeLseq2}
    The following conditions are equivalent:
    \begin{enumerate}[label=(\roman*)]
        \item $F_{\alpha,\beta}$ is a compact operator on $\ell^2$.
        \item $\mathcal{L} := \lim_{n \to \infty} L([n,\infty)) = 0$.
        \item Every $(\epsilon,L)$-sequence has finite length.
        \item $\lim_{k \to \infty} \sigma_k = 0$.
    \end{enumerate}
\end{theorem}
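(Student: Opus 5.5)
The plan is to run the four conditions through a cycle that mirrors the proof of Theorem \ref{T:boundRaeLseq}, with Proposition \ref{P:boundRasigma} and Theorem \ref{T:factorablebound} supplying the link between (i) and (iv).

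\textbf{(i)$\Leftrightarrow$(iv).} This is exactly the compactness part of Proposition \ref{P:boundRasigma}, so nothing new is needed. The remaining work is to connect (ii) and (iii) to (i) or (iv).

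\textbf{(ii)$\Leftrightarrow$(iii).} Suppose $\mathcal{L}=0$ and fix $\epsilon>0$. Choose $n$ with $L([n,\infty))\le\epsilon$.
\begin{itemize}
\item Since $c_{k+1}>c_k+1$, the points of an infinite $(\epsilon,L)$-sequence tend to infinity, so some $c_k\ge n$.
\item By monotonicity of $L$ in the left endpoint, $L([c_k,t-1])\le L([n,\infty))\le\epsilon$ for every $t$.
\item Hence $c_{k+1}=+\infty$, which contradicts infiniteness.
\end{itemize}
Conversely, suppose $\mathcal{L}>0$, say $\mathcal{L}>\epsilon$. Then $L([c_k,\infty))>\epsilon$ for every $k$, because $L([c_k,\infty))\ge\mathcal{L}$ (the limit is a decreasing limit). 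Since $L([c_k,\infty))=\sup_b L([c_k,b])$, some finite $t$ satisfies $L([c_k,t-1])>\epsilon$, so $c_{k+1}<\infty$. Inductively the $(\epsilon,L)$-sequence is infinite, which contradicts (iii).

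\textbf{(ii)$\Rightarrow$(i).} I would rerun the estimate of Lemma \ref{L:Fuv well def} on tails. Fix $a$ with $\alpha_a\ne0$ and split $F_{\alpha,\beta}=P_{<a}F+P_{\ge a}F$, where $P_{\ge a}$ is the coordinate projection onto indices $\ge a$ and $P_{<a}$ its complement. The piece $P_{<a}F$ has finite rank. For the tail rows, write $(Ff)(n)=\alpha_n\sum_{j\le a}\beta_jf(j)+\alpha_n\sum_{j=a+1}^n\beta_jf(j)$.
\begin{itemize}
\item The first summand is rank one.
\item The second summand has norm controlled as in Lemma \ref{L:Fuv well def}, using the pair $k=a$, giving at most $|\alpha_a|^{-1}\|\alpha\|_{2}L([a,\infty))$.
\end{itemize}
So $F$ is within distance $C_a L([a,\infty))$ of finite-rank operators, with $C_a=|\alpha_a|^{-1}\|\alpha\|_2$. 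The catch is that $C_a$ blows up as $a\to\infty$, because $|\alpha_a|$ is small. To fix this I would replace the single index $a$ by averaging over $k\in[a,\infty)$, weighted by $|\alpha_k|^2$. The defining sum of $l$ already contains all pairs $(k,n)$, and this averaging should give a constant like $\|\alpha\|_{2,[a,\infty)}$ in place of $|\alpha_a|$, so the bound becomes $\lesssim L([a,\infty))$, uniformly in $a$. This averaging is the step I expect to be the main obstacle. Once it is in place, letting $a\to\infty$ shows $F$ is a norm limit of finite-rank operators, hence compact.

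\textbf{(iv)$\Rightarrow$(ii).} I would bound $L([\xi_k,\infty))$ by $\sup_{h\ge k}\sigma_h$. The quantity $l(I,f)$ is at most $\sum_{k,n}|\alpha_k|^2|\alpha_n|^2\|f\|_2^2\,B(\max(k,n))$ by Cauchy--Schwarz, with $B(\max(k,n))-B(\min(k,n))\le B(\max(k,n))$. Dividing by $\sum_{k\in I}|\alpha_k|^2$ and summing over the dyadic blocks $[\xi_h,\xi_{h+1})$ should give $L([\xi_k,\infty))^2\lesssim \sup_{h\ge k}\sigma_h^2$, by the same geometric-series computation as in Proposition \ref{P:boundRasigma}. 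Together with (i)$\Leftrightarrow$(iv) this closes the cycle.
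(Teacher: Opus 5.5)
Your cycle has a genuine gap at (iv)$\Rightarrow$(ii), and this is the only arrow in it that leads from $\{$(i),(iv)$\}$ to $\{$(ii),(iii)$\}$. After Cauchy--Schwarz the weight $B(\max(k,n))$ sits on the \emph{larger} index. Summing $|\alpha_k|^2$ over $k\le n$ then only returns at most $\|\alpha\|_{2,I}^2$, which cancels the normalization. So the best this gives is
\[
L([\xi_k,\infty))^2\le 2\sum_{n\ge \xi_k}B(n)|\alpha_n|^2=2\sum_{h\ge k}\sigma_h^2 .
\]
This is an $\ell^2$-tail of $\sigma$, essentially a Hilbert--Schmidt bound, not $\sup_{h\ge k}\sigma_h^2$. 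The geometric series in Proposition \ref{P:boundRasigma} came from evaluating $B$ at the \emph{smaller} index against a tail of $|\alpha|^2$. Here no factor $2^{k-h}$ appears.

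This cannot be fixed by bookkeeping. Take $\beta\equiv1$, $|\alpha_{2^h}|^2=1/(h2^h)$ for $h\ge1$, and $\alpha_j=0$ otherwise. Then $\sigma_h^2=(2^h+1)/(h2^h)\to0$, so $F_{\alpha,\beta}$ is compact. Yet your majorant $\sum_{k,n\in I}|\alpha_k|^2|\alpha_n|^2B(\max(k,n))$ is already infinite for every $I=[2^k,\infty)$ with $k\ge1$.

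Bounding $L$ by a supremum needs a genuine Hardy-type inequality, namely the upper bound $L(I)\le 2\sqrt2\,J(I)$ in \eqref{E:K and L equi}. Choose $c=a$ in the definition of $J$. Then $A^I_a=0$ and $B^I_a\le\sup_{s\ge a}\rho_s$, so $L([a,\infty))\le 2\sqrt2\sup_{s\ge a}\rho_s$. This tends to $0$ under (i) by Theorem \ref{T:factorablebound}. It also tends to $0$ under (iv), since $\rho_s\le 2\sup_{h\ge k}\sigma_h$ for $s\ge\xi_k$ by the computation in Proposition \ref{P:boundRasigma}.

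Alternatively, do as the paper does following \cite{BDSjlms2025} and prove (i)$\Leftrightarrow$(iii) through the approximation-number estimates. By (b), an infinite $(\epsilon,L)$-sequence would force $a_N(F_{\alpha,\beta})\ge\epsilon/\sqrt2$ for all $N$, which is incompatible with compactness. By (a), if every such sequence is finite, then $a_N\to0$.

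The rest of your proposal is sound. Your (i)$\Leftrightarrow$(iv) and (ii)$\Leftrightarrow$(iii) are correct. The averaging you flagged in (ii)$\Rightarrow$(i) also goes through. Averaging the rank-one corrections $f\mapsto(\alpha_nS_f(k))_n$ with weights $|\alpha_k|^2/\|\alpha\|_{2,I}^2$ gives the rank-one map $f\mapsto(\alpha_n(S_f)_I)_n$. By convexity,
\[
\sum_{n\in I}|\alpha_n|^2\bigl|S_f(n)-(S_f)_I\bigr|^2\le\frac{1}{\|\alpha\|_{2,I}^2}\sum_{k\in I}\sum_{n\in I}|\alpha_k|^2|\alpha_n|^2|S_f(n)-S_f(k)|^2=\frac{l(I,f)}{\|\alpha\|_{2,I}^2}\le L(I)^2\|f\|_{2,I}^2 .
\]
This is the identity $K(I)=L(I)/\sqrt2$ in disguise. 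After letting $b\to\infty$, $F_{\alpha,\beta}$ lies within $L([a,\infty))$ of an operator of rank at most $a+1$. That is a legitimate and more elementary substitute for estimate (a), but it does not supply the missing converse direction.
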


These two theorems are generalizations of Theorem 4.4 and Theorem 4.6 in \cite{BDSjlms2025}. The novelty is that now the sequence $\beta$ appears. 
We briefly comment on the proof of Theorem \ref{T:boundRaeLseq}, to show how to take care of $\beta$. {The equivalence $(i)\iff(iv)$ was established in Proposition \ref{P:boundRasigma}. The equivalence $(ii)\iff(iii)$ follows by the definitions. A proof for the implication \mbox{$(ii)\implies(i)$} is contained in Lemma \ref{L:Fuv well def}. To prove that $(i)\implies(ii)$, a deeper analysis of the quantity $L(I)$ is needed. Given an interval $I=[a,b]$, we introduce the following auxiliary quantities.} First, $J(I):=\inf_{c \in I} \lbrace  \max (A_{c}^I,B_{c}^I)\rbrace$, where
\begin{align*}\label{A(c) def}
  A_{c}^I&:=\sup_{a\leq s\leq c}\bigg(\sum_{j=s+1}^c|\beta_j|^2\bigg)^{\frac{1}{2}}\biggl(\sum_{k=a}^{s}|\alpha_k|^{2}\biggr)^{\frac{1}{2}}, \\
  B_{c}^I&:=\sup_{c\leq s\leq b} \bigg(\sum_{j=c}^{s-1}|\beta_j|^2\bigg)^{\frac{1}{2}}\biggl(\sum_{k=s}^{b}|\alpha_k|^{2}\biggr)^{\frac{1}{2}}.
\end{align*} 
Finally, we consider \begin{equation*}
K(I) := \sup_{\|f\|_{2,I}\leq 1} \biggl(\sum_{k\in I} |S_f(k)-(S_f)_I|^2 |\alpha_k|^2\biggr)^{\frac{1}{2}},
\end{equation*}
where \[
f_{I}:=\frac{1}{\sum_{k\in I}|\alpha_k|^2}\sum_{k\in I}f(k)|\alpha_k|^{2}, \qquad S_f(k):=\sum_{j=0}^{k}f(j)\beta_j, \quad k\in\mathbb{N}.
\]
Notice that $F_{\alpha,\beta} f(k)=\alpha_k S_f(k).$  Just as in Lemma 3.6 in \cite{BDSjlms2025}, the quantities $L(I), K(I), J(I)$ satisfy the following relation:
\begin{equation}\label{E:K and L equi}
    \frac{1}{2}J(I)\leq K(I) = \frac{1}{\sqrt{2}}L(I)\leq 2J(I).
\end{equation}
The proof is analogous to that of Lemma 3.6 in \cite{BDSjlms2025}. We only point out the following slight adjustments. To obtain the lower bound $J(I)\leq 2K(I)$, one can choose as test functions in the definition of $K$ respectively $\chi_{[K+1,N]}\overline{\beta}$ and $\chi_{[N,M-1]}\overline{\beta}$. To prove the upper bound for $K$, instead of considering the auxiliary function $g(j)=(c+1-j)^{1/4}$, one should take \(g(j)=(\sum_{l=j}^c|\beta_l|^2)^{1/4}.\) In \cite{BDSjlms2025}, $g$ was estimated via a simple integral. Now, one should use Lemma 1' of \cite{Bennett2004}.

{
Once Equation \eqref{E:K and L equi} is settled, one can use it to prove the following results on the approximation numbers of the factorable matrix $F_{\alpha,\beta}$, that are analogous to Lemmas 4.2 and 4.3 in \cite{BDSjlms2025}:
\begin{itemize}
    \item[$(a)$] Assume that $F_{\alpha,\beta}$ is bounded. Take $\epsilon > 0$, and suppose that the associated $(\epsilon,L)$-sequence has finite length $N$. Then, 
\(a_{2N+2}(R_\alpha)\leq \frac{\epsilon}{\sqrt{2}}.\)
\item[$(b)$] Assume that $F_{\alpha,\beta}$ is bounded. Take $\epsilon > 0$ and suppose that the $(\epsilon,L)$-sequence has length at least $N\in\mathbb{N}$. Then, \( a_{N}(R_{\alpha})\geq\frac{\epsilon}{\sqrt{2}}.\)
\end{itemize}
}
{We point out that a nice feature of the technique of the $(\epsilon,L)$-sequences is that it does not simply provides a Schatten class membership criteria but it gives estimates for the singular values. These estimates are also important for the analysis of other properties of the operator. Indeed, as in the proof of Theorem 4.4 in \cite{BDSjlms2025}, one can use the result in item $(b)$ to prove that $L([0,+\infty))<\sqrt{2}\|F_{\alpha,\beta}\|_{\mathcal{B}(\ell^2)}$, thus proving the implication $(i)\implies(ii)$ and completing the proof of Theorem \ref{T:boundRaeLseq}. The proof of Theorem \ref{T:compRaeLseq2} is analogous of that of Theorem 4.5 in \cite{BDSjlms2025}.}


\section{Schatten class of factorable matrices}\label{S:Schatten}
It was already discussed in \cite{BDSjlms2025} that the adaptation of the $(\epsilon,L)$-sequences techniques from the continuous setting to the discrete one has some critical issues. Adding the sequence $\beta$ creates more complications. In order to have a complete characterization for the Schatten class membership of factorable matrices, we will have to restrict to a special class of sequences. We will elaborate on this throughout this section, but first we start off with a result that does not require any assumption.

\begin{theorem}\label{T:sigmaimpliesSchatten} Let $\alpha\in\ell^2$ and $\beta$ not in $\ell^2$. For $p\in (1,+\infty)$, if $\sigma\in\ell^p$, then $F_{\alpha,\beta}$ belongs to the $p$-Schatten class $\mathcal{S}^p(\ell^2)$.
\end{theorem}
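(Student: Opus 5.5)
One natural route avoids the $(\epsilon,L)$-machinery altogether: split $F_{\alpha,\beta}$ along the dyadic blocks $[\xi_k,\xi_{k+1}-1]$ and estimate each piece directly. Let $P_k$ be the coordinate projection onto the rows indexed by $I_k=[\xi_k,\xi_{k+1}-1]$, with $P_k=0$ when $I_k$ is empty. Since the $I_k$ partition $\mathbb{N}$ (if $\xi_0>0$, the rows in $[0,\xi_0-1]$ form a finite-rank piece that can be ignored), we have $F_{\alpha,\beta}=\sum_k P_kF_{\alpha,\beta}$. Each row block $P_kF_{\alpha,\beta}$ has entries $\alpha_i\beta_j$ for $i\in I_k$, $j\le i$. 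I would split it once more into a "local" part $D_k$, where $j\in[\xi_{k-1},i]$, and a "far" part $E_k$, where $j<\xi_{k-1}$.

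The far part $E_k$ has rank one: $E_k=(P_k\alpha)\otimes(\chi_{[0,\xi_{k-1}-1]}\bar\beta)$. Its norm is therefore at most
\[
\|P_k\alpha\|_2\, B(\xi_{k-1}-1)^{1/2}\le \|P_k\alpha\|_2\,2^{(k-1)/2},
\]
and by \eqref{E:sigmabounds} this is bounded by $\sigma_k$. The operators $E_k$ have mutually orthogonal ranges, but their domains overlap, so summing them is not immediate. To handle this, I would group them as $\sum_k E_k=\sum_{m}\sum_{k>m}P_k\alpha\otimes(\chi_{I_m}\bar\beta)$, rewriting the sum according to the column block $I_m$. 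For fixed $m$, this is a rank-one operator $u_m\otimes v_m$ with $\|v_m\|^2\le B(\xi_{m+1}-1)\le 2^{m+1}$ and $\|u_m\|^2=\sum_{k>m}\|P_k\alpha\|^2\le\sum_{k>m}2^{-k}\sigma_k^2$.

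Both families $(u_m)$ and $(v_m)$ are now orthogonal: the $v_m$ are supported on disjoint column blocks, and the $u_m$ are nested tails, which are not orthogonal. This is the main obstacle. I would try a different approach here: write $E=\sum_k E_k$, with rows in $I_k$ equal to $\alpha_i\langle f,\chi_{[0,\xi_{k-1}-1]}\bar\beta\rangle$. Then $E=\Lambda\, T$, where $\Lambda$ sends the scalar sequence $(t_k)$ to $(P_k\alpha\, t_k)$ and $T f=(\langle f,\chi_{[0,\xi_{k-1})}\bar\beta\rangle)_k$. Up to a diagonal weight, $T$ is a dyadic Cesàro-type summation. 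Writing $\Lambda=\Lambda' W$ with $W=\mathrm{diag}(2^{-k/2})$ and $\|\Lambda'\text{ column }k\|\asymp\sigma_k$, we need $W T$ to be bounded. Indeed, $|2^{-k/2}\langle f,\chi\bar\beta\rangle|\le 2^{-k/2}\sum_{m<k}2^{(m+1)/2}\|f\|_{I_m}$, which is a convolution with a geometric $\ell^1$ kernel, so $WT$ is bounded by Young's inequality. Since $\Lambda'$ is diagonal in the orthonormal system $P_k\alpha/\|P_k\alpha\|$ with singular values $\asymp\sigma_k$, it lies in $\mathcal{S}^p$ when $\sigma\in\ell^p$. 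Hence $E\in\mathcal{S}^p$, because the Schatten class is an ideal.

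For the local part $D=\sum_kD_k$, the supports of the $D_k$ overlap only between neighbours. I would therefore split $D$ into even-$k$ and odd-$k$ sums, so that within each sum both the row supports and the column supports are disjoint. Each such sum is then a block-diagonal direct sum, and $\|\sum D_k\|_{\mathcal{S}^p}^p=\sum\|D_k\|_{\mathcal{S}^p}^p$. The step I expect to be hardest is bounding $\|D_k\|_{\mathcal{S}^p}\lesssim\sigma_{k-1}+\sigma_k$. Uniformly in $p$, I would use the Hilbert–Schmidt norm:
\[
\|D_k\|_{\mathcal{S}^2}^2\le\sum_{i\in I_k}|\alpha_i|^2B(i)=\sigma_k^2.
\]
For $p\ge2$ this gives $\|D_k\|_{\mathcal{S}^p}\le\|D_k\|_{\mathcal{S}^2}\le\sigma_k$, so $D\in\mathcal{S}^p$. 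For $1<p<2$, the Hilbert–Schmidt bound is not enough. The rank of $D_k$ can be large, but $D_k$ is itself a factorable matrix on a block where $B$ only doubles. I would apply the $(\epsilon,L)$ estimates $(a)$ and $(b)$, or \eqref{E:K and L equi}, blockwise, to show that the singular values of $D_k$ decay geometrically relative to $\sigma_k$, say $a_n(D_k)\lesssim\sigma_k\,n^{-1}$ or better. That would give $\|D_k\|_{\mathcal{S}^p}\lesssim_p\sigma_k$ for $p>1$, and summing over $k$ finishes the proof.
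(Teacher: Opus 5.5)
Your treatment of the far part is correct, and it works for every $p>0$. In $E=\Lambda'(WT)$ the operator $WT$ is bounded by Young's inequality. The operator $\Lambda'$ maps $e_k$ to the disjointly supported vectors $2^{k/2}P_k\alpha$, so its singular values are $2^{k/2}\|P_k\alpha\|_2\le\sigma_k$. (The earlier paragraph on $u_m\otimes v_m$ contradicts itself and should be deleted.) The even/odd splitting of $D$ into genuine direct sums is also correct, and $\|D_k\|_{\mathcal{S}^2}\le\sigma_k$ settles $p\ge 2$.

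The gap is the range $1<p<2$, which is exactly where the hypothesis $p>1$ must be used. There you only announce that blockwise $(\epsilon,L)$ estimates should make $a_n(D_k)$ ``decay geometrically \ldots\ say $\sigma_k n^{-1}$ or better''. Geometric decay is false, and nothing better than $n^{-1}$ holds uniformly in $k$. For $\beta\equiv 1$ and $\alpha\equiv c$ on $I_k=[2^k,2^{k+1}-1]$, $D_k$ contains $c$ times the $2^k\times 2^k$ lower triangular matrix of ones, so $a_n(D_k)\gtrsim c\,2^k/n\asymp\sigma_k/n$ for $n\le 2^k$. The whole content of the theorem for $1<p<2$ is therefore the bound $a_n(D_k)\le C\sigma_k/n$ with $C$ independent of $k$, and your proposal does not supply the mechanism that gives it. Item (b) is a lower bound and is irrelevant here. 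Item (a) helps only after you show that the $(\epsilon,L)$-sequence of the block has length $O(\sigma_k/\epsilon)$, and that needs an upper bound for $L$ on subintervals that adds up along a partition.

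That missing ingredient is elementary. For $t\le s$ in $I_k$ set $\Phi(t,s):=\|\alpha_{[t,s]}\|_2\|\beta_{(t,s]}\|_2$, so $\Phi(t,t)=0$. By Cauchy--Schwarz, the sum of $\Phi$ over disjoint subintervals of $I_k$ is at most $\|\alpha_{I_k}\|_2\|\beta_{I_k}\|_2\le\sqrt{2}\,\sigma_k$, using \eqref{E:sigmabounds} and $B(\xi_{k+1}-1)\le 2^{k+1}$.

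Given $\epsilon>0$, cut $I_k$ greedily into consecutive maximal intervals $J_l=[t_l,s_l]$, $1\le l\le M$, with $\Phi(t_l,s_l)\le\epsilon$. Maximality gives $\Phi(t_l,s_l+1)>\epsilon$ for $l<M$. Summing over odd $l$, where the sets $J_l\cup J_{l+1}$ are pairwise disjoint, yields $M<2\sqrt{2}\,\sigma_k/\epsilon+1$.

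Let $R$ be the operator with rows $\alpha_i\chi_{[\xi_{k-1},t_l]}\beta$ for $i\in J_l$. It has rank at most $M$, and $D_k-R$ is block diagonal with blocks on $J_l\times J_l$ of Hilbert--Schmidt norm at most $\Phi(t_l,s_l)\le\epsilon$. Hence $a_{M+1}(D_k)\le\epsilon$, which gives
\[
a_n(D_k)\le\frac{4\sqrt{2}\,\sigma_k}{n}\quad (n\ge 1),\qquad \|D_k\|_{\mathcal{S}^p}^p\le (4\sqrt{2})^p\Big(\sum_{n\ge 1}n^{-p}\Big)\sigma_k^p,
\]
and your decomposition then closes. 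In your $(\epsilon,L)$ language the same count works, because $L(I')\le 2\sqrt{2}\,J(I')\le 2\sqrt{2}\,\|\alpha_{I'}\|_2\|\beta_{I'}\|_2$ for subintervals $I'$ by \eqref{E:K and L equi}.

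Completed this way, your argument is a genuinely different and more elementary proof than the paper's. The paper defers to the global $(\epsilon,L)$-sequence counting of Theorem~5.7 in \cite{BDSjlms2025}. Your localisation to the blocks $[\xi_k,\xi_{k+1}-1]$ removes all long-range interaction through the ideal property of $\mathcal{S}^p$, and $p>1$ enters only through $\sum_n n^{-p}<\infty$.
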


\begin{proof}
The proof is analogous to that of Theorem 5.7 of \cite{BDSjlms2025}, \emph{mutatis mutandis}. 
\end{proof}

\subsection{Necessary condition} 
For the reverse statement of Theorem \ref{T:sigmaimpliesSchatten}, we will need an extra assumption on the sequence $\beta$. In the continuous case treated by Edmunds--Evans--Harris, $\xi_k$ is chosen as the real number such that $B(\xi_k)=2^k$ (here, $B$ is an integral, not a sum). In our discrete setting, it is always true that $B(\xi_k)\geq 2^k$, as $\xi_k$ is chosen exactly as the first point $k$ for which $B(k)> 2^k$. However, in general, we do not have a uniform control from above of the form $B(\xi_k)\leq C 2^k$.

Consider, for example, the sequence
\begin{equation}\label{E:counterexample-xi}
\beta_n=
\begin{cases}
2^{2^m}, & \text{if } n=2^m \text{ for some } m\geq 0,\\
1, & \text{otherwise.}
\end{cases}
\end{equation}
Clearly $\beta\notin\ell^2$, thus $\xi_k$ is well-defined for all $k$. For $k\geq 2$, we prove that  $\xi_k = 2^m$ where $m$ is the unique integer satisfying $2^m < k \leq 2^{m+1}$. In particular, we show that if $2^m < k \leq 2^{m+1}$, then \(  B(2^m-1)\leq 2^k < B(2^m). \) 

 Counting the $m+1$ powers $2^0,\ldots,2^m$ among the integers $0,\ldots,2^m$, we obtain that
\begin{equation}\label{E:B2m}
B(2^m) = \sum_{l=0}^{m} 4^{2^l} + (2^m - m),
\qquad m\geq 0.
\end{equation}
In particular, 
\[
B(2^m)> 4^{2^m}=4^{\frac{1}{2}\cdot 2^{(m+1)} }=2^{2^{m+1}}\geq  2^k, \qquad m\geq 1.
\]
From \eqref{E:B2m}, we also deduce that
\begin{equation}\label{E:B2m-1}
B(2^m-1) = B(2^m)-4^{2^m}
= \sum_{l=0}^{m-1} 4^{2^l} + (2^m - m),
\qquad m\geq 1.
\end{equation}
We claim that
\begin{equation}\label{E:Bdyadic}
 B(2^m-1) < 2\cdot 4^{2^{m-1}} = 2^{2^m+1},
\qquad m\geq 2 .
\end{equation}
We rewrite $B(2^m-1)$ as
\[
B(2^m-1) = 4^{2^{m-1}} + S_m
+ (2^m - m),
\]
where $S_m:=\sum_{l=0}^{m-2} 4^{2^l}$. It suffices to prove that $S_m + (2^m-m) < 4^{2^{m-1}}$ for $m\geq 2$. We estimate $S_m$ as follows:
\[
S_m =\sum_{l=0}^{m-2} 4^{2^l}\leq \sum_{i=1}^{2^{m-2}} 4^{i}
= \frac{4^{\,2^{m-2}+1}-4}{3}
< \frac{4}{3}\,4^{2^{m-2}}
= \frac{4}{3}\,2^{2^{m-1}}, \qquad m    \geq 2 .
\]
Since $2^{m-1}\leq 2^{m}-1$ for $m\geq 1$, we have that
$2^{2^{m-1}}\leq 2^{\,2^m-1}=\tfrac12\,4^{2^{m-1}}$, and therefore
\[
S_m \;<\; \frac{4}{3}\cdot\frac12\,4^{2^{m-1}} \;=\; \frac{2}{3}\,4^{2^{m-1}} .
\]
For the remaining summand, notice that
\[
3(2^m-m) \leq 3\cdot 2^m \leq 2^{2^m}=4^{2^{m-1}}, \qquad m\geq 2.
\]
We conclude that
\[
S_m + (2^m-m) \;<\; \frac{2}{3}\,4^{2^{m-1}} + \frac{1}{3}\,4^{2^{m-1}}
\;=\; 4^{2^{m-1}},
\]
which establishes \eqref{E:Bdyadic}, as claimed. Therefore, we conclude that $\xi_k = 2^m$ for all
\(k \in [2^m+1, 2^{m+1}]\), where \(m\geq 2.\) Finally, we show that the quotient $B(\xi_k)/2^k$ is unbounded. Choosing $k=2^m+1$,
\[
\frac{B(\xi_k)}{2^k}
= \frac{B(2^m)}{2^{2^m+1}}
\geq  \frac{4^{2^m}}{2\cdot 2^{2^{m}}}
=\frac{1}{2}\cdot 2^{2^m}\xrightarrow[m\to\infty]{}+\infty .
\]
Therefore, no finite constant $C$ satisfies $B(\xi_k)\leq C\,2^k$ for all
$k$.

For what follows, we restrict ourselves to a special class: we only consider the sequences $\beta$ {such that $\beta_j\neq0$ for all $j$ and}
\begin{equation}\label{extra assumption}
    C_{\beta}:= \sup_j\frac{|\beta_{j+1}|}{|\beta_{j}|} <\infty.
\end{equation}
{In the rest of the paper, when invoking this property we will ask that $C_\beta<\infty$. Even if we do not specify, we also implicitly ask that $\beta_j\neq0$ for all $j$.}  With this assumption, 
\begin{equation} \label{E:boundBextraassumption}
    2^k\leq B(\xi_k) =B(\xi_k-1)+|\beta_{\xi_k}|^2\leq  (1+C_{\beta}^2)B(\xi_{k}-1)\leq (1+C_{\beta}^2) 2^k.
\end{equation}

{For the sequence $\beta$ introduced in \eqref{E:counterexample-xi},
\[
C_\beta \geq \limsup_m \frac{\beta_{2^m}}{\beta_{2^m-1}}=\lim_m 2^{2^m}=+\infty.
\]
Notice that $\beta_n\leq 2^n$ for every $n$, and that the geometric sequence $b_n=2^n$ satisfies
\[
C_b=\sup_j \frac{2^{j+1}}{2^j}=2.
\]
This shows that the fact that the quotient $B(\xi_k)/2^k$ is unbounded for the sequence $\beta$ is not due to the growth of the sequence, rather to the oscillation. In the continuous setting, this problem does not occur since $\xi_k$ is allowed to be a real number.}

The next lemma is the equivalent of Lemma 5.1 of \cite{BDSjlms2025}. It is the reason why we need the extra assumption \eqref{extra assumption} on $\beta$. We introduce the notation $Z_k:=[\xi_k,\xi_{k+1}-1]$.

\begin{lemma}\label{lemma 17}
   Given the interval $I=[a,b]$, let  
   \begin{align*}
  A(I)&:=\sup_{m\in I}\bigg(\sum_{j=m+1}^b|\beta_j|^2\bigg)^{\frac{1}{2}}\biggl(\sum_{k=a}^{m}|\alpha_k|^{2}\biggr)^{\frac{1}{2}}, \\
  B(I)&:=\sup_{m\in I} \bigg(\sum_{j=a}^{m-1}|\beta_j|^2\bigg)^{\frac{1}{2}}\biggl(\sum_{k=m}^{b}|\alpha_k|^{2}\biggr)^{\frac{1}{2}}.
\end{align*}
Assume that $C_{\beta}:= \sup_j\frac{|\beta_{j+1}|}{|\beta_{j}|} <\infty$. Then, for all $k\geq 0$,
\[
A(Z_{k}\cup Z_{k+1})\geq C_B\sigma_{k}, \qquad B(Z_{k}\cup Z_{k+1})\geq \frac{C_B}{2}\sigma_{k+1},
\]
where $C_B=(1+C_\beta^2)^{-\frac{1}{2}}$.
\end{lemma}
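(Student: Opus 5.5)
The plan is to bound each supremum from below by evaluating it at one well-chosen index $m$, namely the boundary point between $Z_k$ and $Z_{k+1}$. Throughout, $I=Z_k\cup Z_{k+1}=[\xi_k,\xi_{k+2}-1]$. The two ingredients are the bounds \eqref{E:sigmabounds}, which turn $\sigma_k^2$ into a dyadic multiple of $\sum_{Z_k}|\alpha_j|^2$, and the defining inequalities $B(\xi_k-1)\leq 2^k<B(\xi_k)$, combined with the ratio assumption $C_\beta<\infty$.

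\emph{Bound for $A(I)$.} Take $m=\xi_{k+1}-1$, the right endpoint of $Z_k$. The $\alpha$-factor is $\sum_{j\in Z_k}|\alpha_j|^2$, which by \eqref{E:sigmabounds} is at least $2^{-(k+1)}\sigma_k^2$. The $\beta$-factor is $\sum_{j=\xi_{k+1}}^{\xi_{k+2}-1}|\beta_j|^2$.

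The naive estimate $B(\xi_{k+2}-1)-B(\xi_{k+1}-1)$ via \eqref{E:boundBextraassumption} can be negative, so I will instead include one extra term. From $B(\xi_{k+2})>2^{k+2}$ and $B(\xi_{k+1}-1)\leq 2^{k+1}$ we get
\[
\sum_{j=\xi_{k+1}}^{\xi_{k+2}}|\beta_j|^2>2^{k+1}.
\]
The extra term satisfies $|\beta_{\xi_{k+2}}|^2\leq C_\beta^2|\beta_{\xi_{k+2}-1}|^2$, so it is at most $C_\beta^2$ times the sum without it. This gives
\[
\sum_{j=\xi_{k+1}}^{\xi_{k+2}-1}|\beta_j|^2\geq \frac{2^{k+1}}{1+C_\beta^2}.
\]
Multiplying the two factors, the $2^{k+1}$ cancels and we obtain $A(I)^2\geq C_B^2\sigma_k^2$, which is exactly the claimed constant.

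\emph{Bound for $B(I)$.} Take $m=\xi_{k+1}$, the left endpoint of $Z_{k+1}$. The $\alpha$-factor is $\sum_{j\in Z_{k+1}}|\alpha_j|^2$, which by \eqref{E:sigmabounds} is at least $2^{-(k+2)}\sigma_{k+1}^2$. For the $\beta$-factor, the same trick applies: $B(\xi_{k+1})-B(\xi_k-1)>2^{k+1}-2^k=2^k$. Removing the last term $|\beta_{\xi_{k+1}}|^2$ with the ratio bound gives
\[
\sum_{j=\xi_k}^{\xi_{k+1}-1}|\beta_j|^2\geq \frac{2^k}{1+C_\beta^2}.
\]
The product is then at least $\frac{C_B^2}{4}\sigma_{k+1}^2$, which explains the factor $\frac12$ in the statement.

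\emph{Main obstacle: degenerate blocks.} The delicate point is when some block is empty, i.e.\ $\xi_k=\xi_{k+1}$ or $\xi_{k+1}=\xi_{k+2}$. In that case the "remove the last term" step compares $\beta_{\xi_{k+2}}$ with an index outside the block, or the chosen $m$ is not in $I$.
\begin{itemize}
\item If the relevant $\sigma$ vanishes by the convention $\sigma_k=0$, the inequality is trivial.
\item If $Z_{k+1}=\emptyset$ but $\sigma_k\neq0$ (so $Z_k\neq\emptyset$), I will argue directly. Here $|\beta_{\xi_{k+1}}|^2>2^{k+2}-2^{k+1}$, and $\xi_{k+1}-1$ is the last index of $I$.
\end{itemize}
I expect to need care in this second case. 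If it does not fit, I would reinterpret the unions over the nonempty neighbouring blocks; this is consistent with the convention used in the proof of Proposition \ref{P:boundRasigma}, where the largest coinciding $\xi$ is chosen.
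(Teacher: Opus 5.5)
Your argument for the case where $Z_k$ and $Z_{k+1}$ are both nonempty is exactly the paper's. The paper first shows $\sum_{j\in Z_k}|\beta_j|^2\geq C_B^2 2^k$ by writing $2^{k+1}<B(\xi_{k+1})$ and absorbing $|\beta_{\xi_{k+1}}|^2\leq C_\beta^2|\beta_{\xi_{k+1}-1}|^2$ into the block sum. It then evaluates $A$ at $m=\xi_{k+1}-1$ and $B$ at $m=\xi_{k+1}$, just as you do.

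The degenerate case you flagged is a real issue, but you cannot settle it by ``arguing directly'': there the inequalities are false. Take $\beta_n=2^n$, so $C_\beta=2$ and $B(0)=1$, $B(1)=5$, $B(2)=21$, $B(3)=85$. Then $\xi_2=1$, $\xi_3=\xi_4=2$ and $\xi_5=3$, so $Z_2=\{1\}$, $Z_3=\emptyset$ and $Z_4=\{2\}$.
\begin{itemize}
\item $A(Z_2\cup Z_3)=A(\{1\})=0$, because the $\beta$-sum is empty, while $\sigma_2=\sqrt5\,|\alpha_1|$.
\item The lemma's quantity $B(Z_3\cup Z_4)=B(\{2\})$ also vanishes, while $\sigma_4=\sqrt{21}\,|\alpha_2|$. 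This is the case $Z_k=\emptyset\neq Z_{k+1}$, which your list omits.
\end{itemize}
The large term $|\beta_{\xi_{k+1}}|^2>2^{k+1}$ you isolate lies just outside $I$, so it never enters $A(I)$. The paper's proof has the same blind spot: its claim that $\sum_{j\in Z_k}|\beta_j|^2\geq C_B^2 2^k$ for every $k$ silently uses $\xi_{k+1}-1\in Z_k$.

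Your fallback is the correct repair. If $Z_{k+1}=\emptyset$, let $Z_{k'}$ be the first nonempty block with $k'>k$. Since $\xi_{k'}=\xi_{k+1}$,
\[
\sum_{j=\xi_{k'}}^{\xi_{k'+1}}|\beta_j|^2=B(\xi_{k'+1})-B(\xi_{k+1}-1)>2^{k'+1}-2^{k+1}\geq 2^{k+1}.
\]
The ratio trick is now legitimate, because $\xi_{k'+1}-1\in Z_{k'}$. It gives $\sum_{j\in Z_{k'}}|\beta_j|^2\geq C_B^2 2^{k+1}$, and hence $A([\xi_k,\xi_{k'+1}-1])\geq C_B\sigma_k$ with $m=\xi_{k+1}-1$. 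The bound for $B$ is repaired symmetrically, using the last nonempty block preceding $Z_{k+1}$; such a block exists for all but finitely many $k$. The later counting lemmas should then be run with these merged pairs rather than with $Z_k\cup Z_{k+1}$.
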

\begin{proof}
First, notice that by definition of $\xi_k$, we have that
    \begin{align*}
        2^{k+1}&<B(\xi_{k+1}) = B(\xi_k -1)+ \sum_{j=\xi_k}^{\xi_{k+1}-1}|\beta_j|^2+|\beta_{\xi_{k+1}}|^2\\
        &\leq 2^k +  \sum_{j=\xi_k}^{\xi_{k+1}-1}|\beta_j|^2 + C_\beta^2|\beta_{\xi_{k+1}-1}|^2\leq 2^k +(1+C_\beta^2)  \sum_{j=\xi_k}^{\xi_{k+1}-1}|\beta_j|^2.
    \end{align*}
    In particular, we deduce that for every $k\geq 0$
    \[
    \sum_{j=\xi_k}^{\xi_{k+1}-1}|\beta_j|^2\geq (1+C_\beta^2)^{-1}(2^{k+1}-2^k) = C_B^2 2^k.
    \]
 On the one hand, by \eqref{E:sigmabounds},
\begin{align*}
 A(Z_{k}\cup Z_{k+1})&= \sup_{\xi_k\leq m\leq \xi_{k+2}-1}\bigg(\sum_{j=m+1}^{\xi_{k+2}-1}|\beta_j|^2\bigg)^{\frac{1}{2}}\biggl(\sum_{j=\xi_k}^{m} |\alpha_j|^2\biggr)^{\frac{1}{2}}\\
 &\geq \bigg(\sum_{j=\xi_{k+1}}^{\xi_{k+2}-1}|\beta_j|^2\bigg)^{\frac{1}{2}}\biggl(\sum_{j=\xi_k}^{\xi_{k+1}-1} |\alpha_j|^2\biggr)^{\frac{1}{2}}\\
 &\geq C_B \biggl(2^{k+1}\sum_{j=\xi_k}^{\xi_{k+1}-1} |\alpha_j|^2\biggr)^{\frac{1}{2}}\geq C_B\sigma_k.
\end{align*}
Similarly, for $B$ we have that
\begin{align*}
 B(Z_{k}\cup Z_{k+1})&= \sup_{\xi_k\leq m\leq \xi_{k+2}-1}\bigg(\sum_{j=\xi_k}^{m-1}|\beta_j|^2\bigg)^{\frac{1}{2}}\biggl(\sum_{k=m}^{\xi_{k+2}-1}|\alpha_k|^{2}\biggr)^{\frac{1}{2}}\\
 &\geq \bigg(\sum_{j=\xi_{k}}^{\xi_{k+1}-1}|\beta_j|^2\bigg)^{\frac{1}{2}}\biggl(\sum_{j=\xi_{k+1}}^{\xi_{k+2}-1} |\alpha_j|^2\biggr)^{\frac{1}{2}}\geq \frac{C_B}{2}\sigma_{k+1}.
\end{align*}
\end{proof}
Using this lemma, we are able to prove the converse to Theorem \ref{T:sigmaimpliesSchatten}.

\begin{theorem}\label{T:Schattenimpliessigma}
Let $\alpha\in\ell^2$, and $\beta\notin\ell^2$ be such that $C_{\beta}:= \sup_j\frac{|\beta_{j+1}|}{|\beta_{j}|} <\infty$. For $p>0$, if the sequence of approximation numbers $(a_{k}(F_{\alpha,\beta}))_k$ is in $\ell^p$, then $\sigma$ belongs to $\ell^p$.
\end{theorem}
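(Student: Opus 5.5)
We follow the scheme of the proof of the analogous statement in \cite{BDSjlms2025}: we translate $p$-summability of the approximation numbers into a bound on the lengths of $(\epsilon,L)$-sequences, and then use Lemma \ref{lemma 17} to show that every large $\sigma_k$ forces a long $(\epsilon,L)$-sequence. Since $(a_k(F_{\alpha,\beta}))_k\in\ell^p$ implies compactness, Theorem \ref{T:compRaeLseq2} guarantees that for every $\epsilon>0$ the $(\epsilon,L)$-sequence has finite length, say $N(\epsilon)$, and $\sigma_k\to0$. By item $(b)$, $a_{N(\epsilon)}(F_{\alpha,\beta})\geq \epsilon/\sqrt2$. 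Hence $N(\epsilon)\leq \#\{n: a_n\geq \epsilon/\sqrt2\}$, and integrating against $\epsilon^{p-1}\,d\epsilon$ gives
\[
\int_0^\infty \epsilon^{p-1}N(\epsilon)\,d\epsilon\lesssim \sum_n a_n(F_{\alpha,\beta})^p<\infty .
\]

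The key step is a lower bound $N(\epsilon)\gtrsim \#\{k:\sigma_k> C\epsilon\}$ for a constant $C$ depending only on $C_\beta$. First combine Lemma \ref{lemma 17} with \eqref{E:K and L equi}. The quantities $A(I),B(I)$ dominate $A^I_c,B^I_c$ for any $c\in I$: taking $c$ at the endpoints, $A(I)=A^I_b$ and $B(I)=B^I_a$. We need the finer fact that $J(I)\gtrsim \min(A(I),B(I))$ up to splitting. I would argue that for the interval $I_k=Z_{k}\cup Z_{k+1}\cup Z_{k+2}$, any choice of $c$ lies either to the right of $Z_k\cup Z_{k+1}$'s first block or to the left of the last block. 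Then either $A^{I_k}_c\geq A(Z_k\cup Z_{k+1})$-type quantities, or $B^{I_k}_c\geq B(Z_{k+1}\cup Z_{k+2})$. Concretely, if $c\geq \xi_{k+1}$ the test $s=\xi_{k+1}-1$ in $A_c$ uses only the $\beta$-mass on $[\xi_{k+1},c]$, which might be small. So instead I would check both options $c\in Z_k\cup Z_{k+1}$ and $c\in Z_{k+2}$, using the $\beta$-mass bound $\sum_{Z_j}|\beta|^2\geq C_B^2 2^j$ from the proof of Lemma \ref{lemma 17}. The aim is $L(I_k)\geq \sqrt2 J(I_k)\gtrsim C_B\min(\sigma_k,\sigma_{k+1},\dots)$ or, better, $\gtrsim C_B\sigma_{k+1}$. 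Adjusting the triple of blocks so that the middle block carries $\sigma$ should give $L(Z_{k-1}\cup Z_k\cup Z_{k+1})\geq c_0\sigma_k$ with $c_0=c_0(C_\beta)>0$.

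Granting this, fix $\epsilon$ and let $\mathcal K=\{k:\ c_0\sigma_k>\epsilon\}$. Pass to a subset $\mathcal K'$ of indices at mutual distance at least $4$, so that the triples $T_k=Z_{k-1}\cup Z_k\cup Z_{k+1}$ are disjoint; then $\#\mathcal K'\geq \#\mathcal K/4$. Each $T_k$ with $k\in\mathcal K'$ has $L(T_k)>\epsilon$. By monotonicity of $L$ in both endpoints, no interval $[c_j,c_{j+1}-2]$ with $L\leq\epsilon$ can contain a $T_k$. Hence each $T_k$ contains some point $c_{j+1}-1$ or $c_{j+1}$ of the $(\epsilon,L)$-sequence, and disjointness yields $N(\epsilon)+1\geq \#\mathcal K/4$. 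Therefore
\[
\sum_k\sigma_k^p\asymp \int_0^\infty \epsilon^{p-1}\#\{k:c_0\sigma_k>\epsilon\}\,d\epsilon\lesssim \int_0^{\|F\|}\epsilon^{p-1}(N(\epsilon)+1)\,d\epsilon<\infty.
\]
The $+1$ term is integrable because, by Proposition \ref{P:boundRasigma}, $\sigma_k\lesssim\|F_{\alpha,\beta}\|$ and so the range of $\epsilon$ is bounded.

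The main obstacle is the lower bound $L(T_k)\gtrsim\sigma_k$: one must control $J$, which is an infimum over the cut point $c$, via the quantities of Lemma \ref{lemma 17}. Handling the case analysis on the position of $c$ relative to the blocks, and repeated $\xi$'s (empty $Z_k$, where $\sigma_k=0$ and nothing is needed), is where the hypothesis $C_\beta<\infty$ enters through the block $\beta$-mass lower bound.
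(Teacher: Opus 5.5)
\textbf{The key step is false, not merely unproved.} Your outer architecture is the paper's (which follows Theorem 5.5 of \cite{BDSjlms2025}): compactness makes every $(\epsilon,L)$-sequence finite, item $(b)$ bounds $N(\epsilon)$ by $\#\{n: a_n(F_{\alpha,\beta})\geq \epsilon/\sqrt2\}$, a counting lemma controls $\#\{k:\sigma_k>t\}$ by $N(\epsilon_t)$, and a layer-cake integration finishes. Those steps are fine, including your observation that an interval with $L>\epsilon$ must contain some point $c_{j+1}-1$.

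The gap is the step you flag yourself. There is no $c_0>0$ with $L(Z_{k-1}\cup Z_k\cup Z_{k+1})\geq c_0\sigma_k$. For the same reason, $J(I)\gtrsim\min(A(I),B(I))$ fails. Indeed, $l(I,f)$ only involves products $\alpha_k\alpha_n$ with $k\neq n$ both in $I$. So if $\alpha$ restricted to $T_k$ is supported at a single point $m\in Z_k$, then $l(T_k,f)=0$ for every $f$ and $L(T_k)=0$, while $\sigma_k^2=B(m)|\alpha_m|^2>0$. In terms of $J$: the cut point $c=m$ makes both $A^{T_k}_c$ and $B^{T_k}_c$ vanish. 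For instance, take $\beta\equiv1$ and support $\alpha$ on one point of $Z_k$ for each $k\equiv 0 \pmod 3$; your inequality then fails for infinitely many $k$.

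No rearrangement of blocks around a single heavy block can repair this. $J(I)$ is large only if, for \emph{every} cut $c$, there is $\alpha$-mass on one side of $c$ separated from $c$ by a substantial amount of $\beta$-mass. One heavy block cannot guarantee that.

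\textbf{The missing idea is a four-block lemma.} This is the paper's analogue of Lemma 5.2 of \cite{BDSjlms2025}: if $I$ contains $Z_{k_1},\dots,Z_{k_4}$ with $k_1<\dots<k_4$ and $\sigma_{k_i}>\epsilon$, then $J(I)>\frac{C_B}{2}\epsilon$, hence $L(I)\gtrsim C_B\epsilon$ by \eqref{E:K and L equi}. Take any $c\in I$; one of two cases holds, because $c<\xi_{k_1+2}-1$ and $c>\xi_{k_4-1}$ together would force $k_4\leq k_1+2$.
\begin{enumerate}[label=(\roman*)]
\item If $c\geq\xi_{k_1+2}-1$, test $s=\xi_{k_1+1}-1$ in $A^I_c$. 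This pairs the $\alpha$-mass of $Z_{k_1}$ with the $\beta$-mass of $Z_{k_1+1}$ and gives $A^I_c\geq C_B\sigma_{k_1}$, exactly as in Lemma~\ref{lemma 17}.
\item If $c\leq\xi_{k_4-1}$, test $s=\xi_{k_4}$ in $B^I_c$. This pairs the $\beta$-mass of $Z_{k_4-1}$ with the $\alpha$-mass of $Z_{k_4}$ and gives $B^I_c\geq\frac{C_B}{2}\sigma_{k_4}$.
\end{enumerate}

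With this lemma, replace your disjoint triples by the disjoint intervals spanned by consecutive groups of four indices in $\{k:\sigma_k>t\}$. Each such interval has $L>\epsilon_t$ for a suitable $\epsilon_t\asymp C_B t$, so it contains a distinct point $c_{j+1}-1$. This gives $\#\{k:\sigma_k>t\}\leq 4N(\epsilon_t)+3$; the paper states $5N(\epsilon_t)+3$. The rest of your argument then goes through unchanged.
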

\begin{proof} 
The proof follows the same steps of Theorem 5.5 in \cite{BDSjlms2025}. It relies on the following lemmas, that hold as a consequence of Lemma \ref{lemma 17}. We recall that $C_B=(1+C_\beta^2)^{-\frac{1}{2}}$.

\begin{itemize}
    \item (\cite[Lemma 5.2]{BDSjlms2025})
\emph{For $\epsilon > 0$ and an interval $I$, consider the set  
\begin{equation*}
S(\epsilon):=\{k\in\mathbb{N}: Z_k\subseteq I, \sigma_{k}>\epsilon\}.
\end{equation*}
If $\#S(\epsilon)\geq 4$, then $J( I ) > \dfrac{C_B}{2}\epsilon$ and $L(I)>\dfrac{C_B}{2}\epsilon$. }

    \item (\cite[Lemma 5.3]{BDSjlms2025}) \emph{
    Let $t>0$, $\epsilon_t=\dfrac{C_B}{2}t$ and let $N = N(\epsilon_t)$ be the length of the $(\epsilon_t,L)$-sequence $\{c_i\}_{i=0}^N$. Then,
\[\#\{k\in\mathbb{N}: \sigma_{k}>t\}\leq 5N(\epsilon_t)+3.\]
}

    \item (\cite[Lemma 5.4]{BDSjlms2025}) \emph{
 Let $t>0$ satisfy $\mathcal{L}<\dfrac{C_B t}{2}\leq L(0,\infty)$.  Then,
\[\#\{k\in\mathbb{N}:\sigma_{k}>t\}\leq 5\#\left\lbrace k\in\mathbb{N}:a_k(F_{\alpha,\beta})> \frac{C_B t}{4}\right\rbrace+3.\]
}
\end{itemize}
As a consequence of these three lemmas, one can show that
 \begin{align*}
\|\sigma\|^{p}_{p}&\leq C_p \|(a_{k}(F_{\alpha,\beta}))_k\|_{p}^{p}+3\sup_k|\sigma_k|^{p} <+\infty.
 \end{align*} 
\end{proof}

\subsection{Proof of Theorem \ref{T:IntroSchattenrho}}
In the previous subsection, we proved the following.
\begin{theorem}\label{T:SchattenFactorablegeneral}
    Given sequences $\alpha\in\ell^2,\beta\notin\ell^2$ and $1<p<\infty$, if $\sigma\in\ell^p$ then the factorable matrix $F_{\alpha,\beta}$ is in the $p$-Schatten class $\mathcal{S}^p(\ell^2)$. Moreover, if $\sup_j\frac{|\beta_{j+1}|}{|\beta_{j}|} <\infty$, then the converse is also true.
\end{theorem}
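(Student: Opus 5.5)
This statement is a restatement that combines two results already established, so the plan is to assemble them. For the first assertion we invoke Theorem \ref{T:sigmaimpliesSchatten} directly. It requires only $\alpha\in\ell^2$, $\beta\notin\ell^2$ and $1<p<\infty$, and it gives $F_{\alpha,\beta}\in\mathcal{S}^p(\ell^2)$ whenever $\sigma\in\ell^p$. That proof was \emph{mutatis mutandis} the one of Theorem 5.7 of \cite{BDSjlms2025}. If one wanted to re-derive it, the route is the $(\epsilon,L)$-sequence machinery:
\begin{enumerate}[label=(\arabic*)]
\item Use item $(a)$ to bound $a_{2N(\epsilon)+2}(F_{\alpha,\beta})\leq \epsilon/\sqrt{2}$.
\item Control $N(\epsilon)$ through \eqref{E:K and L equi} and $J$. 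On an interval $I$ made of consecutive blocks $Z_k$, one bounds $J(I)$ by a constant times $\sup\{\sigma_k : Z_k\subseteq I \text{ or } Z_k \text{ adjacent to } I\}$, using $B(\xi_k-1)\leq 2^k$.
\item Count intervals and sum over dyadic levels of $\epsilon$ to get $\sum_n a_n^p\lesssim \sum_k\sigma_k^p$.
\end{enumerate}

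For the converse, we additionally assume $C_\beta=\sup_j|\beta_{j+1}|/|\beta_j|<\infty$ (with $\beta_j\neq 0$). The membership $F_{\alpha,\beta}\in\mathcal{S}^p(\ell^2)$ means exactly that $(a_k(F_{\alpha,\beta}))_k\in\ell^p$. Theorem \ref{T:Schattenimpliessigma}, valid for every $p>0$ and in particular for $1<p<\infty$, then yields $\sigma\in\ell^p$.

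Note that $F_{\alpha,\beta}\in\mathcal{S}^p$ forces boundedness, and indeed compactness. So $\sup_k\sigma_k<\infty$ by Proposition \ref{P:boundRasigma}, and $\mathcal{L}=0$ by Theorem \ref{T:compRaeLseq2}. These facts are what make the term $3\sup_k|\sigma_k|^p$ finite and the hypotheses $\mathcal{L}<C_Bt/2\leq L([0,\infty))$ of the third listed lemma usable for all small $t$. Above $L([0,\infty))$, only boundedly many $\sigma_k$ exceed $t$, since $\sigma_k\leq \sqrt{2}\|F_{\alpha,\beta}\|$.

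The only genuinely delicate point is already settled earlier: Lemma \ref{lemma 17}, which needs $C_\beta<\infty$ through \eqref{E:boundBextraassumption} to guarantee that each block $Z_k$ carries $\beta$-mass comparable to $2^k$. No new obstacle arises here. The proof is a two-line citation of Theorem \ref{T:sigmaimpliesSchatten} and Theorem \ref{T:Schattenimpliessigma}, with the remark above ensuring that their hypotheses are met.
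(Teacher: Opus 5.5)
Your proposal is correct and matches the paper: the statement is just the conjunction of Theorem \ref{T:sigmaimpliesSchatten} (no assumption on $C_\beta$) and Theorem \ref{T:Schattenimpliessigma} (which needs $C_\beta<\infty$ and applies since $\mathcal{S}^p$ membership means $(a_k(F_{\alpha,\beta}))_k\in\ell^p$), and the paper assembles it exactly that way. One small correction to your side remark, which is inessential since you only cite the theorem: for $t$ with $C_Bt/2\geq L([0,\infty))$, the fact that at most three $\sigma_k$ exceed $t$ comes from the analogue of \cite[Lemma 5.2]{BDSjlms2025}, not from the bound $\sigma_k\leq\sqrt{2}\|F_{\alpha,\beta}\|_{\mathcal{B}(\ell^2)}$.
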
 We connect this characterization with the sequence $\rho$ defined in \eqref{def of rho}. 

\begin{lemma}\label{L:etaxikesigma}
    Consider $\alpha\in\ell^2$, $\beta\notin\ell^2$ and $1<p<\infty$. If $(\rho_{\xi_k})_{k\in\mathbb{N}}$ is in $\ell^q$, then $\sigma\in\ell^q$. Moreover, if $C_\beta=\sup_j \frac{|\beta_{j+1}|}{|\beta_{j}|}<\infty$, the converse is also true.
\end{lemma}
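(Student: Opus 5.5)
The forward direction is a pointwise comparison. The proof of Proposition \ref{P:boundRasigma} already gives $\sigma_k \leq 2\rho_{\xi_k}$ for every $k$: by \eqref{E:sigmabounds}, $\sigma_k^2\leq 2^{k+1}\sum_{j\geq \xi_k}|\alpha_j|^2$, and $2^k< B(\xi_k)$. Hence $(\rho_{\xi_k})_k\in\ell^q$ immediately gives $\sigma\in\ell^q$ with $\|\sigma\|_q\leq 2\|(\rho_{\xi_k})_k\|_q$. This direction uses no assumption on $\beta$ and holds for any $q>0$.

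For the converse, assume $C_\beta<\infty$. The plan is to bound each $\rho_{\xi_k}$ by a geometrically weighted tail of $\sigma$, and then apply a discrete Hardy-type (Schur) inequality.

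\textbf{Pointwise bound.} By \eqref{E:boundBextraassumption}, $B(\xi_k)\leq (1+C_\beta^2)2^k$. Splitting the tail into the blocks $Z_h$, $h\geq k$, and using the left inequality in \eqref{E:sigmabounds}, which gives $\sum_{j\in Z_h}|\alpha_j|^2\leq 2^{-h}\sigma_h^2$, we get
\[
\rho_{\xi_k}^2 = B(\xi_k)\sum_{h\geq k}\sum_{j\in Z_h}|\alpha_j|^2 \leq (1+C_\beta^2)\sum_{h\geq k}2^{k-h}\sigma_h^2 .
\]
Blocks with $\xi_h=\xi_{h+1}$ are empty and contribute nothing, consistently with the convention $\sigma_h=0$.

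\textbf{Summation.} Write $\rho_{\xi_k}^q\leq C\big(\sum_{h\geq k}2^{k-h}\sigma_h^2\big)^{q/2}$ and split into cases.
\begin{itemize}
\item If $q/2\leq 1$, use the subadditivity of $t\mapsto t^{q/2}$ to get $\sum_k\rho_{\xi_k}^q\leq C\sum_h\sigma_h^q\sum_{k\leq h}2^{(k-h)q/2}\leq C'\|\sigma\|_q^q$.
\item If $q/2>1$, apply Hölder with the weights $2^{(k-h)/2}$. Since $\sum_{h\geq k}2^{(k-h)/2}\leq C$, this gives $\big(\sum_{h\geq k} 2^{k-h}\sigma_h^2\big)^{q/2}\leq C\sum_{h\geq k}2^{(k-h)\theta}\sigma_h^q$ for some $\theta>0$. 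Exchanging the order of summation then yields $\sum_k\rho_{\xi_k}^q\leq C''\|\sigma\|_q^q$.
\end{itemize}
Alternatively, both cases follow at once from Young's convolution inequality on $\ell^{q/2}$ when $q\geq 2$.

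\textbf{Main obstacle.} The only delicate point is the upper bound $B(\xi_k)\lesssim 2^k$. Without the extra assumption it genuinely fails, as the example \eqref{E:counterexample-xi} shows. With $C_\beta<\infty$ it is exactly \eqref{E:boundBextraassumption}, so after that step the rest is routine geometric summation.
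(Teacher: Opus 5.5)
Your proof is correct and follows the paper's argument. The forward direction is the same pointwise comparison $\sigma_k\lesssim\rho_{\xi_k}$. For the converse you use the same bound $\rho_{\xi_k}^2\leq(1+C_\beta^2)\sum_{h\geq k}2^{k-h}\sigma_h^2$, followed by subadditivity when $q\leq 2$. The only difference is for $q>2$: you apply H\"older/Jensen for each fixed $k$ and then swap the sums, whereas the paper applies Minkowski's inequality in $\ell^{q/2}$ to $\sum_m 2^{-m}\sigma_{k+m}^2$, which is exactly your Young's-inequality alternative.
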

\begin{proof}
 On the one hand, we have that
\begin{align*}
 \sigma_k&= \bigg(\sum_{j=\xi_k}^{\xi_{k+1}-1}B(j)|\alpha_j|^2\bigg)^{\frac{1}{2}}\leq B(\xi_{k+1}-1)^{\frac{1}{2}}  \bigg(\sum_{j=\xi_k}^{\infty}|\alpha_j|^2\bigg)^{\frac{1}{2}}\\
 &\leq (2^{k+1})^{\frac{1}{2}}  \bigg(\sum_{j=\xi_k}^{\infty}|\alpha_j|^2\bigg)^{\frac{1}{2}}\leq \sqrt{2} B(\xi_k)^{\frac{1}{2}}\bigg(\sum_{j=\xi_k}^{\infty}|\alpha_j|^2\bigg)^{\frac{1}{2}}=\sqrt{2}  \rho_{\xi_k}.
\end{align*}
Conversely, if $C_\beta<\infty$, notice that for $k\in\mathbb{N}$
\[
2^k\leq B(\xi_{k})=B(\xi_{k}-1)+|\beta_{\xi_k}|^2\leq (1+C_\beta^2)B(\xi_{k}-1)\leq (1+C_\beta^2)2^k.
\]
It follows that
\begin{align*}
 \sum_{k=0}^\infty \rho_{\xi_k}^q&= \sum_{k=0}^\infty B(\xi_{k})^{\frac{q}{2}}  \left(\sum_{j=\xi_k}^\infty |\alpha_j|^2\right)^\frac{q}{2}\\
 &\leq \sum_{k=0}^\infty   \bigg( B(\xi_k)\sum_{h=k}^\infty \frac{1}{2^h}\sum_{j=\xi_h}^{\xi_{h+1}-1}B(j)|\alpha_j|^2\bigg)^\frac{q}{2}\\
 &\leq (1+C_\beta^2)^{\frac{q}{2}} \sum_{k=0}^\infty  \bigg(\sum_{h=k}^\infty 2^{k-h} \sigma_h^2\bigg)^\frac{q}{2}.
\end{align*}
If $q\leq 2$, then by sub-additivity of the power $q/2\leq 1$
\begin{align*}
   \sum_{k=0}^\infty  \bigg(\sum_{h=k}^\infty 2^{k-h}\sigma_h^2\bigg)^\frac{q}{2} &\leq \sum_{k=0}^\infty  \sum_{h=k}^\infty 2^{\frac{q(k-h)}{2}}\sigma_h^q\\
 &\leq \sum_{h=0}^\infty \sigma_h^q  2^{-\frac{qh}{2}}\sum_{k\leq h} 2^{\frac{qk}{2}}\\
 &\leq \sum_{h}\sigma_h^q  2^{\frac{-qh}{2}} \frac{1-2^{\frac{q(h+1)}{2}}}{1-2^{\frac{q}{2}}}\leq C_q \sum_{h} \sigma_{h}^q.
\end{align*}
If $q\geq 2$, we rewrite
\[
 \sum_{k=0}^\infty  \bigg(\sum_{h=k}^\infty 2^{k-h} \sigma_h^2\bigg)^\frac{q}{2}=  \sum_{k=0}^\infty \bigg(\sum_{m=0}^\infty 2^{-m}\sigma_{k+m}^2\bigg)^\frac{q}{2}.
\]
If $\sigma\in\ell^q$, then for every $m\in\mathbb{N}$ the sequence $x^{(m)}:=(2^{-m}\sigma_{k+m}^2)_{k\in\mathbb{N}}$ is in the Banach space $\ell^{\frac{q}{2}}$, and
\begin{align*}
    \sum_{m=0}^\infty \|x^{(m)}\|_{\frac{q}{2}}&= \sum_{m=0}^\infty \bigg(\sum_{k=0}^\infty 2^{-\frac{mq}{2}}\sigma_{k+m}^q\bigg)^{\frac{2}{q}}\\
    &\leq \sum_{m=0}^\infty 2^{-m} \bigg(\sum_{k=0}^\infty \sigma_{k}^q\bigg)^{\frac{2}{q}}=2\|\sigma\|_q^2.
\end{align*}
Then, the series $\sum_{m\in\mathbb{N}} x^{(m)}$ converges in $\ell^{\frac{q}{2}}$ and
\begin{align*}
\Big\|\sum_{m\in\mathbb{N}} x^{(m)}\Big\|_{\frac{q}{2}}^\frac{q}{2} =\sum_{k=0}^\infty \bigg(\sum_{m=0}^\infty 2^{-m}\sigma_{k+m}^2\bigg)^\frac{q}{2} \leq \Big(\sum_{m\in\mathbb{N}} \|x^{(m)}\|_{\frac{q}{2}}\Big)^\frac{q}{2}  \leq 2^\frac{q}{2} \|\sigma\|_q^q.
 \end{align*}

\end{proof}

In the following theorem, we summarise the relations between the sequence $\rho$ and the Schatten class of $F_{\alpha,\beta}$.

\begin{theorem}\label{T:thmSchattenrho}
    Let $\alpha\in\ell^2,\beta\notin\ell^2$ and $1<p<\infty$. If the subsequence $(\rho_{\xi_k})_{k\in\mathbb{N}}$ is in $\ell^p$, then $F_{\alpha,\beta}\in \mathcal{S}^p(\ell^2)$. In particular, if $\rho\in\ell^p,$ then $F_{\alpha,\beta}\in \mathcal{S}^p(\ell^2)$. Moreover, assuming that $C_\beta=\sup_j \frac{|\beta_{j+1}|}{|\beta_{j}|}<\infty$, if $F_{\alpha,\beta}\in \mathcal{S}^p(\ell^2)$ then $(\rho_{\xi_k})_{k\in\mathbb{N}}$ is in $\ell^p$. However, in this case, $\rho$ is not necessarily in $\ell^p$.
\end{theorem}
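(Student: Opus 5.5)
The plan is to chain the results already established in this section and then add a separate counterexample for the last claim.

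\emph{Sufficiency.} Suppose $(\rho_{\xi_k})_k\in\ell^p$. The first half of Lemma \ref{L:etaxikesigma}, with $q=p$, uses no hypothesis on $\beta$ and gives $\sigma_k\le\sqrt2\,\rho_{\xi_k}$. Hence $\sigma\in\ell^p$, and Theorem \ref{T:sigmaimpliesSchatten} yields $F_{\alpha,\beta}\in\mathcal S^p(\ell^2)$.

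For the statement about the whole sequence $\rho$: if $\rho\in\ell^p$, then $\sum_k\rho_{\xi_k}^p<\infty$ as soon as each value $\xi_k$ is taken by only a bounded number of indices $k$. This bound needs an argument, since consecutive $\xi_k$ may coincide. If $\xi_k=\dots=\xi_{k+r}=n$, then $B(n-1)\le 2^k$ and $B(n)>2^{k+r}$. In that case $\rho_n^2\ge B(n)\sum_{j\ge n}|\alpha_j|^2$, so one term $\rho_n^p$ would have to control $r+1$ copies of itself. I would avoid this by bounding $\sigma$ directly instead of passing through $(\rho_{\xi_k})$. When $\xi_k<\xi_{k+1}$,
\[
\sigma_k^2\le 2^{k+1}\sum_{j\ge\xi_k}|\alpha_j|^2\le 2\rho_{\xi_k}^2,
\]
and when $\xi_k=\xi_{k+1}$ we have $\sigma_k=0$. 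So only indices $k$ with $\xi_k<\xi_{k+1}$ contribute. Among all indices sharing a common value of $\xi_k$, only the last one can satisfy $\xi_k<\xi_{k+1}$. Hence the map $k\mapsto\xi_k$ is injective on the support of $\sigma$, which gives $\|\sigma\|_p^p\le 2^{p/2}\|\rho\|_p^p$. Theorem \ref{T:sigmaimpliesSchatten} then concludes.

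\emph{Necessity under $C_\beta<\infty$.} Suppose $F_{\alpha,\beta}\in\mathcal S^p(\ell^2)$, i.e.\ $(a_k(F_{\alpha,\beta}))_k\in\ell^p$. Theorem \ref{T:Schattenimpliessigma} gives $\sigma\in\ell^p$, and the converse part of Lemma \ref{L:etaxikesigma} with $q=p$ gives $(\rho_{\xi_k})_k\in\ell^p$.

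\emph{Failure of $\rho\in\ell^p$.} Take $\beta\equiv1$, so $\xi_k=2^k$ and $C_\beta=1$. Let $\alpha_j=2^{-k/2}\gamma_k$ for $j=2^{k}$ and $\alpha_j=0$ otherwise, where $(\gamma_k)\in\ell^p$ is nonnegative. Then
\[
\sigma_k^2=(2^k+1)2^{-k}\gamma_k^2\asymp\gamma_k^2,
\]
so $\sigma\in\ell^p$ and $F_{\alpha,\beta}\in\mathcal S^p(\ell^2)$. On the other hand, for $2^{k-1}<m\le 2^k$,
\[
\rho_m^2=(m+1)\sum_{h\ge k}2^{-h}\gamma_h^2\ge \tfrac12\gamma_k^2 .
\]
Summing over these $2^{k-1}$ values of $m$ gives $\|\rho\|_p^p\gtrsim\sum_k 2^{k}\gamma_k^p$. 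Choosing $\gamma_k=2^{-k/p}$ makes $(\gamma_k)\in\ell^p$ while $\sum_k 2^k\gamma_k^p=\infty$, so $\rho\notin\ell^p$.

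The only delicate step is the injectivity bookkeeping used for the claim that $\rho\in\ell^p$ implies $F_{\alpha,\beta}\in\mathcal S^p(\ell^2)$. The other steps are direct applications of results already proved, plus an elementary computation for the counterexample.
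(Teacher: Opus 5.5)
Your proof is correct. For the two main implications it coincides with the paper's, which simply chains Lemma \ref{L:etaxikesigma} with Theorems \ref{T:sigmaimpliesSchatten} and \ref{T:Schattenimpliessigma}. You differ from the paper in two places.

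First, the paper treats ``$\rho\in\ell^p\Rightarrow F_{\alpha,\beta}\in\mathcal{S}^p(\ell^2)$'' as an immediate special case of the statement for $(\rho_{\xi_k})_k$ and gives no argument. Your worry about repeated values of $\xi_k$ is justified: without $C_\beta<\infty$ the multiplicities can be unbounded. For the $\beta$ of \eqref{E:counterexample-xi}, the value $2^m$ is taken $2^m$ times. Choosing $\alpha$ supported on powers of $2$ with $\sum_{j\ge 2^h}|\alpha_j|^2=4^{-2^h}2^{-2h/p}$ then gives $\rho\in\ell^p$ but $\sum_k\rho_{\xi_k}^p=\infty$. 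Your device closes this gap properly: bound $\sigma$ directly, using $\sigma_k=0$ when $\xi_k=\xi_{k+1}$ and the injectivity of $k\mapsto\xi_k$ on the remaining indices. It yields $\|\sigma\|_p\le\sqrt2\,\|\rho\|_p$. Under $C_\beta<\infty$ the multiplicities are uniformly bounded, so the issue only concerns the unrestricted first part.

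Second, your counterexample is different. The paper takes $\beta\equiv1$ with the smooth tail $\sum_{j\ge m}|\alpha_j|^2=(m+1)^{-1-1/p}$, obtained by telescoping. It checks that $(\rho_{2^k})_k\in\ell^p$ while $\rho\notin\ell^p$, and then appeals to the first part. You instead take a lacunary $\alpha$ supported on $\{2^k\}$ and verify $\sigma\in\ell^p$ directly. Both are elementary. The paper's example shows the discrepancy between $\rho$ and its dyadic subsequence already occurs for regularly decaying tails. Yours isolates the mechanism: each mass $\gamma_k$ is seen by about $2^{k-1}$ terms of $\rho$ but by a single term of $\sigma$.
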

\begin{proof}
     The relation between the Schatten class properties of $F_{\alpha,\beta}$ and the sequence $(\rho_{\xi_k})_{k\in\mathbb{N}}$ is clear from Lemma \ref{L:etaxikesigma}. To conclude the proof, we show that there exists a sequence $\rho$ not in $\ell^p$ such that the subsequence $(\rho_{\xi_k})_{k\in\mathbb{N}}$ is in $\ell^p$. The example is actually a Rhaly matrix. The idea is simple: take $\beta\equiv 1$ and $\alpha$ such that
    \[
    \sum_{k=m}^\infty |\alpha_k|^2 \asymp \frac{1}{(m+1)^{1+\frac{1}{p}}}, \qquad m\to\infty.
    \]
    An explicit example can be constructed using the telescopic identity
    \begin{equation*} \sum_{k=m}^{\infty} \left( \frac{1}{(k+1)^{1+\frac{1}{p}}} - \frac{1}{(k+2)^{1+\frac{1}{p}}} \right) = \frac{1}{(m+1)^{1+\frac{1}{p}}}.
\end{equation*}
Choosing then
\[
\alpha_k = \left(\frac{1}{(k+1)^{1+\frac{1}{p}}} - \frac{1}{(k+2)^{1+\frac{1}{p}}}\right)^{\frac{1}{2}},
\]
it is easy to check that $\alpha\in\ell^2$. Since in this case $\xi_k=2^k,$
 \[
\rho_{2^k}=B(2^k)\sum_{j=2^k}^\infty|\alpha_k|^2=(2^k+1)\frac{1}{(2^k+1)^{1+\frac{1}{p}}}
 \]
 is in $\ell^p$, whereas
 \[
 \rho_m = B(m)\sum_{j=m}^\infty|\alpha_k|^2=\frac{1}{(m+1)^{\frac{1}{p}}}
 \]
 does not belong to $\ell^p$.
\end{proof}

Now, Theorem \ref{T:IntroSchattenrho} is actually a corollary of the previous result.

\subsection{Proof of Theorem \ref{T:betapolynomial}} For this part, take $\alpha\in\ell^2$ and $1<p<\infty$. By assumption, $\beta$ satisfies
    \[
    C_1 (n+1)^\delta \leq |\beta_n| \leq C_2(n+1)^\delta, \qquad n\geq 0,
    \]
    for $\delta>-\frac{1}{2}$ and $C_1,C_2>0$. Notice that $\beta$ does not belong to $\ell^2$, for 
    \[
    |\beta_n|^2 \geq C_1^2(n+1)^{2\delta}\geq \frac{C_1^2}{n+1} , \qquad n\geq 0,
    \]
    and that
    \[
   \sup_n \frac{|\beta_{n+1}|}{|\beta_n|}\leq \sup_n \frac{C_2}{C_1}\left(1+\frac{1}{n+1}\right)^\delta  <\infty.   \]
Then, by Theorems \ref{T:sigmaimpliesSchatten} and \ref{T:Schattenimpliessigma}, $F_{\alpha,\beta}$ belongs to $\mathcal{S}^p(\ell^2)$ if and only if the sequence $\sigma$ is in $\ell^p.$ By \eqref{E:sigmabounds},  
\begin{equation*}
        2^k \sum_{j=\xi_k}^{\xi_{k+1}-1}|\alpha_j|^2 \leq \sigma_k^2 \leq  2^{k+1} \sum_{j=\xi_k}^{\xi_{k+1}-1}|\alpha_j|^2.
    \end{equation*}

We recall that the sequence $\xi_k$ is defined as
\[
 \xi_k := \inf\{n \in \mathbb{N} \colon B(n) > 2^k\},
\]
where
\[
B(n)=\sum_{k=0}^n|\beta_n|^2 \asymp \sum_{k=0}^n (k+1)^{2\delta},
\]
uniformly in $n\geq 0$. With a standard trick, estimating the sum with the integral, one has that for $n$ big enough
\[
B(n) \asymp (n+1)^{2\delta+1}. 
\]
The fact that $\delta>-\frac{1}{2}$ is crucial. Knowing also that $ B(\xi_n)\asymp 2^n$ (see \eqref{E:boundBextraassumption}), we conclude that \(\xi_k \asymp a^k\), where $a=2^{\frac{1}{2\delta+1}}$. Thus, it follows that 
\begin{equation} \label{E:asigma}
    K_1\sum_{k=0}^\infty\left(2^k\sum_{j=[a^k]}^{[a^{k+1}]-1}\!\!\!|\alpha_j|^2\right)^{\frac{p}{2}}\!\!\!\leq \sum_{k=0}^\infty \left(2^k \sum_{j=\xi_k}^{\xi_{k+1}-1}\!\!\!|\alpha_j|^2\right)^{\frac{p}{2}}\!\!\! \leq K_2\sum_{k=0}^\infty\left(2^k\sum_{j=[a^k]}^{[a^{k+1}]-1} \!\!\!|\alpha_j|^2\right)^{\frac{p}{2}},
\end{equation}

for constants $K_1,K_2>0$. These constants are hiding the following fact: an interval $[\xi_k,\xi_{k+1}-1]$ might contain or be contained in multiple adjacent intervals $[a^k,a^{k+1}-1]$, but the maximum number of such intervals is uniformly bounded since $\xi_k\asymp a^k$.

At this point, we want to rewrite this quantity changing the $a$-adic interval in the more standard $2$-adic interval. We will need the following technical results. This part is somewhat folklore for the experts in Besov spaces (see for example \cite{T88}), and it was kindly provided to us by Mattia Calzi, whom we thank. We include the details for the sake of completeness.

\begin{lemma}\label{lem:1}
		Take  $a_1,a_2>1$  and $\lambda>0$. Then, there is a constant $C>0$ such that
		\[
		\Bigg{|}\!\Bigg{|}  \sum_{j\in\mathbb{N}} \min\left(\frac{a_1^{\lambda j}}{a_2^{\lambda k}},\frac{a_2^{\lambda k}}{a_1^{\lambda j}} \right)|b_j| \Bigg{|}\!\Bigg{|} _{\ell^p_k}\leq C\|{b}\|_{p}
		\]
		for every sequence $b$.
	\end{lemma}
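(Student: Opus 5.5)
The plan is to treat the kernel $K(k,j):=\min\bigl(a_1^{\lambda j}a_2^{-\lambda k},\,a_2^{\lambda k}a_1^{-\lambda j}\bigr)$ as an operator on sequences and show that it is bounded on $\ell^p$ for every $1\le p\le\infty$. The tool is Schur's test in its simplest form: it suffices to prove
\[
\sup_k \sum_{j\in\mathbb{N}} K(k,j)<\infty \qquad\text{and}\qquad \sup_j\sum_{k\in\mathbb{N}} K(k,j)<\infty .
\]
Then interpolation, or directly H\"older's inequality applied to $K(k,j)=K(k,j)^{1/p'}K(k,j)^{1/p}$, gives the bound $\|\sum_j K(k,j)|b_j|\|_{\ell^p_k}\le C_{\mathrm{row}}^{1/p'}C_{\mathrm{col}}^{1/p}\|b\|_p$. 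Concretely,
\[
\Bigl(\sum_j K(k,j)|b_j|\Bigr)^p\le \Bigl(\sum_j K(k,j)\Bigr)^{p-1}\sum_j K(k,j)|b_j|^p .
\]
Summing over $k$ and swapping the order of summation (Tonelli, since everything is nonnegative) gives the claim. For $p<1$, which the paper never needs, one would instead use $(\sum x_j)^p\le\sum x_j^p$ and the same column bound with $K^p$ in place of $K$.

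The key step is the row and column estimates. Write $\theta_1=\lambda\log a_1>0$ and $\theta_2=\lambda\log a_2>0$, so that $K(k,j)=e^{-|\theta_1 j-\theta_2 k|}$.

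\emph{Row sums.} For fixed $k$, the points $x_j=\theta_1 j$ form an arithmetic progression with step $\theta_1$. Hence
\[
\sum_j e^{-|x_j-\theta_2 k|}\le 2\sum_{m\ge0} e^{-m\theta_1}=\frac{2}{1-e^{-\theta_1}},
\]
because at most two points of the progression lie in each interval $[t+m\theta_1,\,t+(m+1)\theta_1)$ on either side of $t=\theta_2 k$.

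\emph{Column sums.} The same argument with the roles swapped bounds $\sum_k e^{-|\theta_1 j-\theta_2 k|}$ by $2/(1-e^{-\theta_2})$.

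Both bounds are uniform, so $C=\bigl(2/(1-e^{-\theta_1})\bigr)^{1/p'}\bigl(2/(1-e^{-\theta_2})\bigr)^{1/p}$ works.

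I do not expect a genuine obstacle. The only care needed is in the comparison with a geometric series: one must verify that the counting of lattice points near $\theta_2 k$ does not depend on $k$, and this is exactly what uniform spacing provides. The essential point is that $\lambda>0$ and $a_1,a_2>1$ make the spacings $\theta_1,\theta_2$ strictly positive. For $p=\infty$ only the row bound is needed.
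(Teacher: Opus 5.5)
Your proof is correct and follows the same route as the paper. For $p\ge 1$ you use Schur's test, and you also check the uniform row and column bounds $2/(1-e^{-\theta_i})$, which the paper only asserts; for $p<1$ you use subadditivity of $t\mapsto t^p$ together with the column bound for $K^p$. One correction to your side remark: the case $p<1$ is actually needed, because the proof of Lemma~\ref{lem:2} applies this lemma in $\ell^{p/2}$, and $p/2<1$ whenever $1<p<2$.
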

	We recall that with the symbol $\ell^p_k$ we denote that the sequence is indexed on $k$.
	\begin{proof}
    Set  \[
        c_{j,k}:= \min\left(\frac{a_1^{j}}{a_2^{ k}},\frac{a_2^{ k}}{a_1^{ j}} \right), \qquad j,k\in\mathbb{N}.
        \]
		If $p\geq 1$, the statement follows from Schur's lemma, see \cite[Appendix A]{grafakos2009modern}, for
        \[
		\sup_{j\in \mathbb{N}} \sum_{k\in\mathbb{N}} c_{j,k}^\lambda + \sup_{k\in \mathbb{N}} \sum_{j\in\mathbb{N}} c_{j,k}^\lambda <\infty,
		\]
        for every $\lambda >0$. If, otherwise, $p<1$, then observe that
		\[
		\Bigg{|}\!\Bigg{|}  \sum_{j\in\mathbb{Z}} c_{j,k}^\lambda |b_j| \Bigg{|}\!\Bigg{|} _{\ell^p_k(\mathbb{Z})}^p\leq \Bigg{|}\!\Bigg{|}  \sum_{j\in\mathbb{Z}} c_{j,k}^{\lambda p}|b_j|^p \Bigg{|}\!\Bigg{|} _{\ell^1_k(\mathbb{Z})}\leq \left(\sup_{j\in \mathbb{N}} \sum_{k\in\mathbb{N}} c_{j,k}^{\lambda p} \right)\||b_j|^p\|_{\ell^1_j(\mathbb{Z})},
		\]
		whence the conclusion. 
	\end{proof}

	\begin{lemma}\label{lem:2}
		Take $a_1,a_2>1$ and $\lambda>0$. For every sequence $b$ and for every $j\geq 0$, define $\Delta^{(1)}_j(b)= \sqrt{\sum_{ a_1^j\leq |k|<a_1^{j+1}  }|b_k|^2 }$ and $\Delta^{(2)}_j(b)= \sqrt{\sum_{ a_2^j\leq |k|<a_2^{j+1}  }|b_k|^2 }$. Then, there is a constant $C>0$ such that
		\[
		\|  a_1^{\lambda j} \Delta^{(1)}_j b   \|_{\ell^p_j(\mathbb{N})}\leq C {\|a_2^{\lambda j} \Delta^{(2)}_j(b)\|}_{\ell^p_j(\mathbb{N})}.
		\]
	\end{lemma}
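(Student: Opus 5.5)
The plan is to dominate each $a_1$-adic block of $b$ by the $a_2$-adic blocks that meet it, and then to absorb the change of scale using Lemma \ref{lem:1}. Write $E^{(1)}_j=\{k: a_1^j\le |k|<a_1^{j+1}\}$ and $E^{(2)}_i=\{k: a_2^i\le |k|<a_2^{i+1}\}$. The sets $E^{(2)}_i$ partition the indices with $|k|\ge 1$. The block $E^{(1)}_j$ meets $E^{(2)}_i$ only if $a_2^i<a_1^{j+1}$ and $a_2^{i+1}>a_1^j$. This is the same as saying $|i\log a_2 - j\log a_1|\le \max(\log a_1,\log a_2)$. Hence
\[
\Delta^{(1)}_j(b)^2\le \sum_{i\in T_j}\Delta^{(2)}_i(b)^2,\qquad \Delta^{(1)}_j(b)\le \sum_{i\in T_j}\Delta^{(2)}_i(b),
\]
where $T_j$ is the set of such $i$. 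The cardinality of $T_j$ is bounded uniformly in $j$, by a constant $M$ depending only on $a_1,a_2$. The index $k=0$ lies in no block for $j\ge 0$, so it needs no separate treatment.

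Next I put in the weight. For $i\in T_j$ the scales are comparable, $a_1^{\lambda j}\le D\, a_2^{\lambda i}$ with $D=\max(a_1,a_2)^\lambda$. Also, $\min\bigl(a_1^{\lambda j}/a_2^{\lambda i},\,a_2^{\lambda i}/a_1^{\lambda j}\bigr)\ge D^{-1}$ on $T_j$. With $b'_i:=a_2^{\lambda i}\Delta^{(2)}_i(b)$ this gives
\[
a_1^{\lambda j}\Delta^{(1)}_j(b)\le D\sum_{i\in T_j} b'_i\le D^2\sum_{i\in\mathbb{N}}\min\Bigl(\frac{a_2^{\lambda i}}{a_1^{\lambda j}},\frac{a_1^{\lambda j}}{a_2^{\lambda i}}\Bigr) b'_i .
\]
The last sum is exactly the operator of Lemma \ref{lem:1}, with the roles of $a_1$ and $a_2$ swapped: the summation index is $i$ and the output index is $j$. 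That lemma is symmetric in its hypotheses, so taking the $\ell^p_j$ norm gives
\[
\|a_1^{\lambda j}\Delta^{(1)}_j(b)\|_{\ell^p_j}\le C\,\|b'\|_p .
\]
This is the claim.

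The only step needing care is the uniform bound on $\#T_j$ and the comparability of scales on $T_j$; both come from the logarithmic inequality above. An alternative is to avoid Lemma \ref{lem:1} altogether, since each $i$ belongs to at most $M'$ of the sets $T_j$. For $p\ge1$ I would then use $(\sum_{i\in T_j}x_i)^p\le M^{p-1}\sum_{i\in T_j}x_i^p$, and for $p<1$ subadditivity. Summing over $j$ then gives the bound directly. I would present the route through Lemma \ref{lem:1} since it is the one the paper sets up.
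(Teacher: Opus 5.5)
Your proof is correct and follows the same strategy as the paper: cover each $a_1$-adic block by the $a_2$-adic blocks it meets, insert the kernel $\min\bigl(a_1^{\lambda j}/a_2^{\lambda i},a_2^{\lambda i}/a_1^{\lambda j}\bigr)$, and conclude with Lemma \ref{lem:1}. The difference is technical. The paper keeps the squares and enlarges the window to all $k\ge k_j$ (where $a_2^{k_j}\le a_1^j<a_2^{k_j+1}$). It then applies Lemma \ref{lem:1} to $a_2^{2\lambda k}(\Delta^{(2)}_k b)^2$ in $\ell^{p/2}$ with exponent $2\lambda$, so it genuinely relies on the decay of the kernel and, when $p<2$, on the sub-$\ell^1$ case of that lemma. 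You instead use the uniformly bounded overlap $\#T_j\le M$ to linearize and work directly in $\ell^p$, and your alternative counting argument shows that Lemma \ref{lem:1} is then not needed at all.
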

	\begin{proof}
		For every $j\in\mathbb{N}$, take $k_j\geq 0$ so that 
        \begin{equation}\label{eq utile1}
                    a_2^{k_j}\leq a_1^j< a_2^{k_j+1}.
        \end{equation}
        Observe that
		\[
		(\Delta^{(1)}_j b)^2\leq \sum_{k= k_j }^{k_{j+1}+1} (\Delta^{(2)}_k b)^2\leq \sum_{k\geq  k_j } (\Delta^{(2)}_k b)^2
		\]
		for every $j\geq 0$.
		Consequently,
		\[
		a_1^{2\lambda j }(\Delta^{(1)}_j b)^2\leq \sum_{k\geq  k_j }  \frac{a_1^{2\lambda j}}{a_2^{2\lambda k}}  a_2^{2\lambda k}(\Delta^{(2)}_k b)^2\leq a_2^{4\lambda}\sum_{k\geq 0 }  \min\left(\frac{a_1^{2\lambda j}}{a_2^{2\lambda k}},\frac{a_2^{2\lambda k}}{a_1^{2\lambda j}} \right)  a_2^{2\lambda k}(\Delta^{(2)}_k b)^2,
		\]
		since $a_1^{2\lambda j} /a_2^{2\lambda k}\leq a_2^{4\lambda} a_2^{2\lambda k}/a_1^{2\lambda j} $ for every $k\geq k_j$ due to \eqref{eq utile1}.
		 Lemma \ref{lem:1} then shows that there is a constant $C>0$ such that
		\begin{align*}
			\|{  a_1^{\lambda j} \Delta^{(1)}_j b   }\|_{\ell_j^p}&\leq a_2^{2\lambda}\Bigg{|}\!\Bigg{|} {  \sum_{k\geq 0 }  \min\left(\frac{a_1^{2\lambda j}}{a_2^{2\lambda k}},\frac{a_2^{2\lambda k}}{a_1^{2\lambda j}} \right)   a_2^{2\lambda k}(\Delta^{(2)}_k b)^2 }\Bigg{|}\!\Bigg{|} _{\ell_j^{\frac{p}{2}}}^\frac{1}{2}\\
				&\leq a_2^{ 2\lambda } C^\frac{1}{2} \|{ a_2^{2\lambda k}(\Delta^{(2)}_k b)^2 }\|_{\ell^{\frac{p}{2}}_k}^\frac{1}{2}=a_2^{ 2\lambda } C^\frac{1}{2} \|{ a_2^{\lambda k}\Delta^{(2)}_k b }\|_{\ell^{p}_k},
		\end{align*}
		whence the result.
\end{proof}

Taking $a_1=a=2^{\frac{1}{2\delta+1}}$, $a_2=2$, and $\lambda = \frac{1}{2}\log_a 2=\delta+\frac{1}{2}>0$, we have that
\begin{align*}
    \left(2^k\sum_{j=[a^k]}^{[a^{k+1}]-1}|\alpha_j|^2\right)^{\frac{1}{2}}=a_1^{\lambda k} \Delta^{(1)}_k (|\alpha|^2), \qquad k\in\mathbb{N},
\end{align*}
and 
\begin{align*}
   a_2^{\lambda k} \Delta^{(2)}_k (|\alpha|^2)= \left(2^{(2\delta+1)k}\sum_{j=2^k}^{2^{k+1}-1}|\alpha_j|^2\right)^{\frac{1}{2}}, \qquad k\in\mathbb{N}.
\end{align*}
By \eqref{E:asigma} and Lemma \ref{lem:2} (applied twice, inverting the roles of $a_1,a_2$ the second time), we conclude that $\sigma\in\ell^p$ if and only if
\[
\sum_{k=0}^\infty \left(2^{(2\delta+1)k}\sum_{j=2^k}^{2^{k+1}-1} |\alpha_j|^2\right)^{\frac{p}{2}}<\infty,
\]
and the proof is complete.

\section{Weighted Hardy spaces}\label{S:weightedHardy}
We recall that given a sequence of positive real numbers $\omega=(\omega_n)_n$, the associated weighted Hardy space is defined as 
\begin{equation*}
    H^2(\omega) := \left\{f(z) = \sum_{n=0}^\infty a_n z^n \in \operatorname{Hol}(\mathbb{D}) : \|f\|^2_\omega = \sum_{n=0}^\infty |a_n|^2 \omega_n < \infty \right\}.
\end{equation*}
We review some basic facts. The space $H^2(\omega)$ is a reproducing kernel Hilbert space. The inner product is
\[
\langle \sum_n a_n z^n, \sum_n b_n z^n \rangle_\omega := \sum_n a_n\overline{b_n}\omega_n.
\] 
The reproducing kernel is given by
\begin{equation*}
    k^\omega(z,\lambda) := \sum_{n=0}^\infty \frac{\overline{\lambda}^n}{\omega_n}z^n, \qquad z,\lambda\in\mathbb{D}.
\end{equation*}
The set of monomials $\{\varphi_n\}_{n\in\mathbb{N}}$ defined by
\begin{equation}\label{E:orthonormalmonomials}
    \varphi_n(z) := \frac{z^n}{\sqrt{\omega_n}}, \qquad z\in\mathbb{D},
\end{equation}
is a complete orthonormal basis for $H^2(\omega)$. The properties of the backward shift operator 
\[
Lf(z) := \frac{f(z)-f(0)}{z}, \qquad z\in\mathbb{D},
\]
acting on $H^2(\omega)$, can be entirely characterized in terms of the weight sequence $\omega$ \cite{CM1995}. A straightforward computation yields $L\varphi_0 = 0$ and 
\[
L\varphi_{n} = \sqrt{\frac{\omega_{n-1}}{\omega_n}}\varphi_{n-1}, \quad  n>0.
\]
As a consequence, $L$ is a bounded operator on $H^2(\omega)$ if and only if $\sup_{j\geq 0} (\omega_j/\omega_{j+1}) < \infty$ \cite[Proposition 2.7]{CM1995}. We discuss some specific examples. Given a parameter $\gamma > -1$, the weighted Bergman space $A^2_\gamma$ is defined as

\begin{equation*}
A^2_\gamma = \{f\in\operatorname{Hol}(\mathbb{D})\colon  \int_{\mathbb{D}} |f(z)|^2 \, (1 - |z|^2)^\gamma \operatorname{d}\!A(z) < \infty\},
\end{equation*}
where $A$ is the bidimensional Lebesgue measure. It is well-known that $A^2_\gamma=H^2(\omega_\gamma)$, with $\omega_{\gamma,n}=(n+1)^{-(\gamma+1)}, n\geq 0$. The case $\gamma=0$ gives the standard Bergman space on the unit disk. Given a parameter $\alpha\in[0,1]$, the weighted Dirichlet space $\mathcal{D}_\alpha$ is
\begin{equation*}
\mathcal{D}_\alpha = \{f\in\operatorname{Hol}(\mathbb{D})\colon  \int_{\mathbb{D}} |f'(z)|^2 \, (1 - |z|^2)^\alpha \operatorname{d}\!A(z) < \infty\}.
\end{equation*}
It is well-known that $\mathcal{D}_\alpha=H^2(\operatorname{w}_\alpha)$, with $\operatorname{w}_{\alpha,n}=n^{1-\alpha}, n\geq 1$. They provide a natural scale of spaces linking the classical Dirichlet space $\mathcal{D}:=\mathcal{D}_0$ to the Hardy space $\mathcal{D}_1=H^2$. We could consider $\mathcal{D}_\alpha$ also for $\alpha>1$, but we would obtain the weighted Bergman space $A^2_{\alpha-2}$. For some references and results on weighted Hardy spaces see for example \cite{CP,GP,LLQR,Z}.

Given a sequence of complex numbers $\eta=(\eta_n)_n$, we define the Rhaly operator $R_\eta$, which acts on sequences $x=(x_n)_n$ as 
\begin{equation*}
    (R_\eta x)_n = \eta_n \sum_{j=0}^n x_j, \qquad n\in\mathbb{N}.
\end{equation*}
By identifying an analytic function $f(z) = \sum_n a_n z^n$ with its sequence of Taylor coefficients, we can consider the action of $R_\eta$ on $\operatorname{Hol}(\mathbb{D})$ via the formula
\[
R_\eta f(z) = \sum_{n=0}^\infty \left(\eta_n \sum_{j=0}^n a_j\right) z^n, \qquad z\in\mathbb{D}.
\]

Given a weight $\omega$, we compute the matrix representation of the Rhaly operator $R_\eta$ acting on $H^2(\omega)$ with respect to the orthonormal basis $\{\varphi_n\}_n$ introduced in \eqref{E:orthonormalmonomials}. For $z\in\mathbb{D}$, we have that
\begin{align*}
    R_\eta \varphi_j(z) &= \sum_{k=0}^\infty \left( \eta_k \sum_{i=0}^k \frac{\delta_{j,i}}{\sqrt{\omega_j}}\right) z^k \\
    &= \frac{1}{\sqrt{\omega_j}}\sum_{k=j}^\infty \eta_k z^k = \frac{1}{\sqrt{\omega_j}}\sum_{k=j}^\infty \eta_k \sqrt{\omega_k} \varphi_k(z).
\end{align*}
Therefore, the $j$-th column of the matrix associated with $R_\eta$ on $H^2(\omega)$ is the vector whose $k$-th component is $0$ for $k < j$, and $\frac{\eta_k \sqrt{\omega_k}}{\sqrt{\omega_j}}$ for $k \geq j$. We obtain the factorable matrix
\begin{equation}\label{E:matrix Rhaly on weighted Hardy}
\begin{pmatrix}
\frac{\eta_0 \sqrt{\omega_0}}{\sqrt{\omega_0}}  & 0 & 0 & \cdots\\
\frac{\eta_1 \sqrt{\omega_1}}{\sqrt{\omega_0}}  &  \frac{\eta_1 \sqrt{\omega_1}}{\sqrt{\omega_1}}& 0 & \cdots\\
\frac{\eta_2 \sqrt{\omega_2}}{\sqrt{\omega_0}} &  \frac{\eta_2 \sqrt{\omega_2}}{\sqrt{\omega_1}}& \frac{\eta_2 \sqrt{\omega_2}}{\sqrt{\omega_2}}  & \cdots \\
\vdots & \vdots & \vdots  & \ddots
\end{pmatrix}
=
\begin{pmatrix}
\alpha_0 \beta_0 & 0 & 0  & \cdots\\
\alpha_1 \beta_0 &  \alpha_1 \beta_1& 0  & \cdots\\
 \alpha_2\beta_0 &  \alpha_2\beta_1& \alpha_2\beta_2  & \cdots \\
\vdots & \vdots & \vdots  & \ddots
\end{pmatrix},
\end{equation}
where we have set $\alpha_k = \eta_k \sqrt{\omega_k}$ and $\beta_j = 1/\sqrt{\omega_j}$. Notice that the case of the classical Hardy space $(\omega \equiv 1)$ corresponds to the case of Rhaly matrices. We remark that it is very pleasing that the study of Rhaly operators on weighted Hardy spaces corresponds to a very natural generalization of the Rhaly matrices. Also, we obtain at once the proof of Theorem \ref{T:boundcompFactorableweighted}.

\begin{proof}[Proof of Theorem \ref{T:boundcompFactorableweighted}]
    It follows from Theorem 2 of \cite{bennett}, applied to the factorable matrix in \eqref{E:matrix Rhaly on weighted Hardy}, that is, with $\alpha_k = \eta_k \sqrt{\omega_k}$ and $\beta_j = 1/\sqrt{\omega_j}$. 
\end{proof}

We apply this result to the examples that we discussed before and we recover some results that are already present in the literature. 
\begin{itemize}
    \item Weighted Bergman spaces: $A^2_\gamma=H^2(\omega_\gamma)$, with $\omega_{\gamma,n}=(n+1)^{-(\gamma+1)},$ for $\gamma>-1$. By Theorem \ref{T:boundcompFactorableweighted}, the operator $R_\eta$ is bounded on $A^2_\gamma$ if and only if
   \begin{equation} \label{E:boundBergman}
 \sum_{k=0}^m |\eta_k|^2(k+1)^{-(\gamma+1)}\left(\sum_{j=0}^k (j+1)^{\gamma+1}\right)^2=O  \left(\sum_{j=0}^m (j+1)^{\gamma+1}\right).
   \end{equation}
   We can rewrite \eqref{E:boundBergman} as
   \[
   \sum_{k=0}^m |\eta_k|^2k^{\gamma+3} =O(m^{\gamma+2}).
   \]
  Using the lemma at page 100 in \cite{duren1970theory}, this is equivalent to $\sum_{k=m}^\infty |\eta_k|^2=O(m^{-1}).$ By Corollary 3.2 of \cite{Bourdon_Shapiro_Sledd_1989}, we conclude that $R_\eta$ is bounded on $A^2_\gamma$ if and only if the analytic function $f_\eta(z)=\sum_{k=0}^\infty \eta_k z^k$ belongs to the mean Lipschitz space $\Lambda(2,\frac{1}{2})$, as found by Galanopoulos--Girela in \cite{GG2025}. Notice that this does not depend on the parameter $\gamma>-1.$
  \item Weighted Dirichlet spaces: $\mathcal{D}_\alpha=H^2(\operatorname{w}_\alpha)$, with $\operatorname{w}_{\alpha,n}=n^{1-\alpha},$ for $\alpha\in [0,1]$.  By Theorem \ref{T:boundcompFactorableweighted}, the operator $R_\eta$ is bounded on $A^2_\gamma$ if and only if
   \begin{equation} \label{E:boundDirichlet}
 \sum_{k=0}^m |\eta_k|^2k^{1-\alpha}\left(\sum_{j=0}^k j^{\alpha-1}\right)^2=O  \left(\sum_{j=0}^m j^{\alpha-1}\right).
   \end{equation}
   We have to distinguish two cases. If $\alpha>0,$ then we can rewrite \eqref{E:boundDirichlet} as
   \[
   \sum_{k=0}^m |\eta_k|^2k^{\alpha+1} =O(m^{\alpha}).
   \]
As before, this is equivalent to $\sum_{k=m}^\infty |\eta_k|^2=O(k^{-1}),$ and we conclude that $R_\eta$ is bounded on $\mathcal{D}_\alpha$ if and only if the analytic function $f_\eta(z)=\sum_{k=0}^\infty \eta_k z^k$ belongs to the mean Lipschitz space $\Lambda(2,\frac{1}{2})$, as found by Galanopoulos--Girela in \cite{GG2025}. However, for $\alpha=0$ the situation changes. We use the other characterization of Theorem \ref{T:boundcompFactorableweighted}. The operator $R_\eta$ is bounded on $\mathcal{D}$ if and only if 
\[
 K_2:=\sup_{m\in\mathbb{N}} \left(\sum_{k=m}^\infty k|\eta_k|^2\right)\left(\sum_{j=0}^m \frac{1}{j}\right)<\infty,
  \]
  which becomes
  \[
  \sum_{k=m}^\infty k|\eta_k|^2 = O\big(\log(m)^{-1}\big).
  \]
  This also appears in \cite[Theorem 2]{Blasco2026Cesarotype}.
\end{itemize}

We move on to the Schatten class properties of {Rhaly operators on weighted Hardy spaces}. As we already discussed in the previous section, we have to ask some extra conditions on the sequence $\beta$ {given by $\beta_j=1/\sqrt{\omega_j}$}.  Interestingly enough, the sequence~$\beta$ satisfies the condition \eqref{extra assumption} if and only if the backward shift operator $L$ is bounded on $H^2(\omega)$. This is a very natural and standard operator-theoretic condition. {The condition $\beta\notin\ell^2$ has also implications on the properties of $H^2(\omega)$. For example, $\sum_j \frac{1}{\omega_j}=+\infty$ if and only if the normalized reproducing kernels $k_{w}$ of $H^{2}(\omega)$ tend to zero weakly as $|w|\to 1$ (see \cite[Theorem 2.17]{CM1995} keeping in mind that in their notation introduced at page 14, $\beta_j^2$ is equal to our $\omega_j$). The condition $\beta\notin\ell^2$ also implies that $H^{2}(\omega)$ is \emph{algebraically consistent}. We refer to Definition 1.5 and Theorem~2.15 of \cite{CM1995} for more details. We will not be using directly these properties, but we believe it is worth mentioning these connections.}

We are in a position to prove Theorem \ref{T:SchattenBergman}.

\begin{proof}[Proof of Theorem \ref{T:SchattenBergman}]
    We begin with the Bergman spaces. Since $A^2_\gamma=H^2(\omega_\gamma)$, with $\omega_{\gamma,n}=(n+1)^{-(\gamma+1)}, n\geq 0$, the Rhaly operator $R_\eta$ on $A^2_\gamma$ is unitarily equivalent to the factorable matrix $F_{\alpha,\beta}$ acting on $\ell^2,$ with  $\alpha_k = \eta_k \sqrt{\omega_{\gamma,k}}$ and $\beta_j = 1/\sqrt{\omega_{\gamma,j}}=(j+1)^{\frac{\gamma+1}{2}}$. Since $\gamma>-1$, setting $\delta:=\frac{\gamma+1}{2}>0$ we can apply Theorem \ref{T:betapolynomial}. Then, $R_\eta$ belongs to the $p$-Schatten class if and only if 
    \[
    \sum_{k=0}^\infty \left(2^{(\gamma+2)k}\sum_{j=2^k}^{2^{k+1}-1} |\eta_j|^2(j+1)^{-(\gamma+1)}\right)^{\frac{p}{2}}<\infty.
    \]
    But since
    \[
    \sum_{j=2^k}^{2^{k+1}-1} |\eta_j|^2(j+1)^{-(\gamma+1)} \asymp 2^{-(\gamma+1)k}\sum_{j=2^k}^{2^{k+1}-1} |\eta_j|^2, \qquad k\geq 0,
    \]
    we conclude that $R_\eta$ belongs to the $p$-Schatten class if and only if 
    \[
    \sum_{k=0}^\infty \left(2^k\sum_{j=2^k}^{2^{k+1}-1} |\eta_j|^2\right)^{\frac{p}{2}}<\infty.
    \]
    We deal with the Dirichlet spaces analogously. Since $\mathcal{D}_\alpha = H^2(\operatorname{w}_\alpha)$, with $\operatorname{w}_{\alpha,n}=n^{1-\alpha}, n\geq 1$, we have that 
    \[
    \beta_j = \frac{1}{\sqrt{\operatorname{w}_{\alpha,j}}} \asymp (j+1)^\delta,
    \]
     where now $\delta:=(\alpha-1)/2 \in (-\frac{1}{2},0]$. We conclude as in the Bergman case. 
\end{proof}

The Dirichlet space $\mathcal{D}_0$ corresponds to the critical case $\delta=-\frac{1}{2}$, and we have to treat it separately. In this case,  \[
    B(n)=\sum_{j=0}^n (j+1) \asymp \log n,
    \]
    for $n$ big enough. Knowing also that $B(\xi_n)\asymp 2^n$, we deduce that $\xi_n$ has a super-exponential growth, and it is more complicated to handle. All the other cases of weighted Bergman and Dirichlet spaces that we treated shared the same characterization for the $p$-Schatten class with the classical Hardy space. We can show that $\mathcal{D}$ does not. For $p=2$, a (compact) infinite matrix $A=(a_{kj})_{k,j}$ belongs to the Schatten class $\mathcal{S}^2(\ell^2)$ if and only if 
    \[
    \sum_{k,j\in\mathbb{N}}|a_{kj}|^2<\infty.
    \]
    See \cite[Theorem 1.22]{zhu2007operator}. For the Rhaly operator $R_\eta$ on $H^2$, since $\alpha=\eta$ and $\beta\equiv 1$, this condition becomes
    \begin{equation} \label{E:final}
        \sum_{k=0}^\infty \sum_{j=0}^k|\eta_k|^2= \sum_{k=0}^\infty (k+1)|\eta_k|^2<\infty.
    \end{equation}
Notice that this is also obtained from Theorem \ref{T:betapolynomial}, with $\delta=0$, since 
\[
2^k\sum_{j=2^k}^{2^{k+1}-1}|\alpha_j|^2\asymp \sum_{j=2^k}^{2^{k+1}-1} j |\alpha_j|^2,
\] 
and the intervals $[2^k,2^{k+1}-1]$ form a partition. However, for the Rhaly operator $R_\eta$ on $\mathcal{D}$, since $\alpha_k=\eta_k\sqrt{k+1}$ and $\beta_j=(j+1)^{\frac{1}{2}}$, 
    \[
    \sum_{k=0}^\infty \sum_{j=0}^k\frac{(k+1)|\eta_k|^2}{j+1}\asymp \sum_{k=0}^\infty (k+1)|\eta_k|^2 \log k<\infty,
    \] 
    which is not equivalent to \eqref{E:final}.

\section{Acknowledgments}
The authors would like to thank Dr. Mattia Calzi for helpful discussions concerning the Besov spaces. The first two authors are members of Gruppo Nazionale per l’Analisi Matematica, la Probabilit\`a e le loro Applicazioni (GNAMPA) of Istituto Nazionale di Alta Matematica (INdAM).


\begin{thebibliography}{10}

\bibitem{BDSjlms2025}
C.~Bellavita, E.~Dellepiane, and G.~Stylogiannis.
\newblock Boundedness, compactness and {S}chatten class for {R}haly matrices.
\newblock {\em Journal of the London Mathematical Society}, 112(4):e70304, 2025.

\bibitem{BMNP}
C.~Bellavita, A.~M. Moreno, G.~Nikolaidis, and J.~A. Pel\'aez.
\newblock Fractional {V}olterra-type operator induced by radial weight acting on {H}ardy space.
\newblock {\em Mathematische Zeitschrift}, 312(2):Paper No. 49, 38, 2026.

\bibitem{bennett}
G.~Bennett.
\newblock Some elementary inequalities.
\newblock {\em The Quarterly Journal of Mathematics}, 38(4):401--425, 12 1987.

\bibitem{BENNETT1991}
G.~Bennett.
\newblock Some elementary inequalities, {III}.
\newblock {\em The Quarterly Journal of Mathematics}, 42(1):149–174, 1991.

\bibitem{Bennett2004}
G.~Bennett.
\newblock Series of positive terms.
\newblock In {\em Orlicz Centenary Volume}, page 29–38. Institute of Mathematics Polish Academy of Sciences, 2004.

\bibitem{Blasco2026Cesarotype}
{O}. Blasco, P.~Galanopoulos, and D.~Girela.
\newblock Ces{\`a}ro-type operators acting on {D}irichlet spaces.
\newblock {\em Revista Matem{\'a}tica Complutense}, 2026.

\bibitem{Bourdon_Shapiro_Sledd_1989}
P.~S. Bourdon, J.~H. Shapiro, and W.~T. Sledd.
\newblock {\em Fourier series, mean Lipschitz spaces and bounded mean oscillation}, page 81–110.
\newblock London Mathematical Society Lecture Note Series. Cambridge University Press, 1989.

\bibitem{CP}
I.~Chalendar and J.~R. Partington.
\newblock Compactness and norm estimates for weighted composition operators on spaces of holomorphic functions.
\newblock In {\em Harmonic analysis, function theory, operator theory, and their applications}, volume~19 of {\em Theta Ser. Adv. Math.}, pages 81--89. Theta, Bucharest, 2017.

\bibitem{CM1995}
C.~C. Cowen and B.~D. MacCluer.
\newblock {\em Composition operators on spaces of analytic functions}.
\newblock Studies in Advanced Mathematics. CRC Press, Boca Raton, FL, 1995.

\bibitem{DS}
P.~Duren and A.~Schuster.
\newblock {\em Bergman spaces}, volume 100 of {\em Mathematical Surveys and Monographs}.
\newblock American Mathematical Society, Providence, RI, 2004.

\bibitem{duren1970theory}
P.~L. Duren.
\newblock {\em Theory of {$H^p$} Spaces}, volume~38 of {\em Pure and Applied Mathematics}.
\newblock Academic Press, New York, 1970.

\bibitem{Edmunds1988}
D.~E. Edmunds, W.~D. Evans, and D.~J. Harris.
\newblock Approximation numbers of certain {V}olterra integral operators.
\newblock {\em Journal of the London Mathematical Society}, s2-37(3):471–489, 1988.

\bibitem{Edmunds1997}
D.~E. Edmunds, W.~D. Evans, and D.~J. Harris.
\newblock Two-sided estimates of the approximation numbers of certain {V}olterra integral operators.
\newblock {\em Studia Mathematica}, 124(1):59--80, 1997.

\bibitem{GG2025}
P.~Galanopoulos and D.~Girela.
\newblock Rhaly operators acting on {H}ardy, {B}ergman, and {D}irichlet spaces.
\newblock {\em Journal of Geometric Analysis}, 36(3):Paper No. 115, 30, 2026.

\bibitem{GP}
E.~A. Gallardo-Guti\'errez and J.~R. Partington.
\newblock Norms of composition operators on weighted {H}ardy spaces.
\newblock {\em Israel Journal of Mathematics}, 196(1):273--283, 2013.

\bibitem{grafakos2009modern}
L.~Grafakos.
\newblock {\em Modern Fourier Analysis}.
\newblock Graduate Texts in Mathematics. Springer New York, 2009.

\bibitem{GrosseErdmann1998}
K.~G. Grosse-Erdmann.
\newblock {\em The blocking technique, weighted mean operators and {H}ardy's inequality}, volume 1679 of {\em Lecture Notes in Mathematics}.
\newblock Springer-Verlag, Berlin, 1998.

\bibitem{HKZ}
H.~Hedenmalm, B.~Korenblum, and K.~Zhu.
\newblock {\em Theory of {B}ergman spaces}, volume 199 of {\em Graduate Texts in Mathematics}.
\newblock Springer-Verlag, New York, 2000.

\bibitem{LLQR}
P.~Lef\`evre, D.~Li, H.{} Queff\'elec, and L.~Rodr\'iguez-Piazza.
\newblock Characterization of weighted {H}ardy spaces on which all composition operators are bounded.
\newblock {\em Analysis \& PDE}, 18(8):1921--1954, 2025.

\bibitem{T88}
H.~Triebel.
\newblock Characterizations of {B}esov-{H}ardy-{S}obolev spaces: a unified approach.
\newblock {\em Journal of Approximation Theory}, 52(2):162--203, 1988.

\bibitem{zhu2007operator}
K.~Zhu.
\newblock {\em Operator Theory in Function Spaces}.
\newblock Mathematical surveys and monographs. American Mathematical Society, 2007.

\bibitem{Z}
N.~Zorboska.
\newblock {\em Composition operators on weighted {H}ardy spaces}.
\newblock ProQuest LLC, Ann Arbor, MI, 1988.
\newblock Thesis (Ph.D.)--University of Toronto (Canada).

\end{thebibliography}
\end{document}